\newtheorem{remark}{Remark}[section]
\newtheorem{theorem}{Theorem}[section]
\newtheorem{lemma}[theorem]{Lemma}
\newtheorem{proposition}[theorem]{Proposition}
\newtheorem{definition}{Definition}[section]
\newtheorem*{theorem A}{Theorem 1.1}
\newtheorem*{theorem B}{Theorem 1.2}
\newtheorem{statement}{Statement}[section]
\DeclareMathOperator{\GL}{GL}
\DeclareMathOperator{\Tr}{tr}
\DeclareMathOperator{\tr}{tr}
\DeclareMathOperator{\Ind}{Ind}
\DeclareMathOperator{\Ker}{Ker}
\DeclareMathOperator{\M}{M}
\newcommand{\iso}{\mathbin{\kern.15em\widetilde{\hphantom{\hspace{.6em}}}\kern-.98em\rightarrow\kern.05em}}
\newcommand{\longiso}{\mathbin{\kern.3em\widetilde{\hphantom{\hspace{1.1em}}}\kern-1.55em\longrightarrow\kern.1em}}
\newcommand{\Irr}{\operatorname{Irr}}
\newcommand{\F}{\ensuremath{\mathbb{F}}}
\newcommand{\mfg}{\mathfrak{g}}
\newcommand{\mfp}{\mathfrak{p}}
\newcommand{\mfo}{\mathcal{O}}
\newcommand{\mfO}{\mathcal{O}}
\title[The stable representations of $\GL_N$]{\textbf{The stable representations of $\GL_N$ over finite local principal ideal rings}}
\author{Nariel Monteiro}
\address{Department of Mathematics
University of California
Santa Cruz, CA 95064 U.S.A}
\email{namontei@ucsc.edu}
\begin{document}

\maketitle

\begin{abstract}
Let $\mathcal{O}$ be a discrete valuation ring with maximal ideal $\mathfrak{p}$ and with finite residue field $\mathbb{F}_{q}$, the field with $q$ elements where $q$ is a power of a prime $p$. For $r \ge 1$, we write  $\mathcal{O}_r$ for the reduction of $\mathcal{O}$ modulo the ideal $\mathfrak{p}^r$. An irreducible ordinary representation of the finite group $\mathrm{GL}_{N}(\mathcal{O}_{r})$ is called stable if its restriction to the principal congruence kernel $K^l=1+\mathfrak{p}^{l}\mathrm{M}_{N}(\mathcal{O}_r)$, where $l=\lceil \frac{r}{2} \rceil$, consists of irreducible representations whose stabilizers modulo $K^{l'}$, where $l'=r-l$, are centralizers of certain matrices in $\mathfrak{g}_{l'}=\mathrm{M}_{N}(\mathcal{{O}}_{l'})$, called stable matrices.

The study of stable representations is motivated by constructions of strongly semisimple representations, introduced by Hill, which is a special case of stable representations. In this paper, we explore the construction of stable irreducible representations of the finite group $\mathrm{GL}_{N}(\mathcal{O}_{r})$ for $N \ge 2$.
\end{abstract}

 \section{Introduction}
 
Let $\mathrm{F}$ be a non-Archimedean local field -- i.e., $\mathrm{F}$ is the field of fractions of a complete discrete valuation ring $\mathcal{O}$ with finite residue field. Let $\mathfrak{p}$ be the maximal ideal of $\mathcal{O}$ and let $\pi$ be a fixed generator of $\mfp$. Assume that the residue field $\mathcal{O/ \mfp} \simeq \F_{q}$ has characteristic $p$. We write  $\mathcal{O}_r$ for the reduction of $\mathcal{O}$ modulo $\mathfrak{p}^r$. Two good examples to keep in mind are $\mathbb{F}_{p}[t]/t^{r}$ and $\mathbb{Z}_p/p^r \simeq \mathbb{Z}/p^r$. This paper is concerned with the construction of a family of continuous complex-valued irreducible representations of the groups $\GL_{N} (\mathcal{O})$. As a maximal compact subgroup of $\GL_{N} (\mathrm{F})$, the group $\GL_{N} (\mathcal{O})$ and its representations play an important role in the representation theory of $\GL_{N} (\mathrm{F})$; see, for instance, \cite{bushnell1993admissible}. A representation of $\GL_{N} (\mathcal{O})$ is continuous if and only if it factors through a finite quotient $\GL_{N} (\mathcal{O}_r)$. We thus focus on the representations of the finite groups $\GL_{N} (\mathcal{O}_r)$.

The complex representation theory of general linear groups over the rings $\mathcal{O}_r$ has been heavily studied; see \cite{stasinski2017representations} for an overview and history of the subject. For example, two independent constructions of regular representations have been established. One is by Krakovski, Onn, and Singla \cite{krakovski2018regular}, which works whenever $p$ is not $2$, and the other is by Stasinski and Stevens \cite{stasinski2017regular}. The latter works for any $\mathcal{O}$ independent of $p$, and thus there is a complete construction of all the regular representations of $\GL_N(\mathcal{O}_r)$. On the other hand, Hill worked on constructing strongly semisimple representations \cite{hill1995semisimple}. In this paper, we introduce a new class of representations of $G_r=\GL_N(\mathcal{O}_r)$, the stable representations, and we explore how to construct them.

The previous constructions of irreducible representations of $G_r$ explore the fact that $G_r$ has an abelian normal subgroup, denoted as $K^l$, obtained as the kernel of the surjective map $G_r \to G_l$ where $l=\lceil \frac{r}{2} \rceil$. Using Clifford's theory, one can relate representations of the normal subgroup $K^l$ to representations of $G_r$. Moreover, one can show that the set of irreducible representations of $K^l$ is in bijection with $\mathfrak{g}_{l'}=\M_{N}(\mathcal{{O}}_{l'})$. Thus, we will denote an irreducible representation of $K^l$ as $\psi_M$, where $M$ is a matrix with entries in $\mathcal{O}_{l'}$. We call $M$ a \emph{stable} matrix if by possibly passing to an unramified extension $\mathcal{\hat{O}}_{l'}$ of $\mathcal{O}_{l'}$, we have that $M$ is conjugate to a matrix $A+\pi B$ with entries in $\mathcal{\hat{O}}_{l'}$ such that $A$ is in Jordan canonical form with $B$ in $Z(C_{\M_{N}(\mathcal{\hat{O}}_{l'})}(A))$. We note here the definition of a matrix in Jordan canonical form over a ring. If $R$ is a ring, then $J_n(\lambda)$ denotes an $n \times n$ matrix with all diagonal entries equal to $\lambda$, all entries just above the diagonal equal to $1$, and all other entries are $0$. We say that a matrix $A$ over a ring $R$ is in Jordan canonical form if it is block-diagonal with each diagonal block of the form $J_n(\lambda)$, for some $\lambda \in R$ and some positive integer $n$.

We say that a character $\chi$ of $G_r$ is a \emph{stable} character if $\chi$ is above $\psi_M$ for $M$ a stable matrix, i.e., $\psi_M$ is an irreducible constituent of the restriction of $\chi$ to $K^l$. Our first
main result is the construction of the stable representations of $G_r$ for $r$ even by using Clifford's theory. To be more specific, we prove the following:

\begin{theorem}
\label{theorem:Ieven}

Let $r=2l$ and $\chi$ be a stable irreducible character of $G_r=\GL_{N}(\mathcal{O}_{r})$ above $\psi_M$, where $\psi_M$ is an irreducible character of $K^l$. Then:  

\begin{enumerate}
    \item $\psi_M$ extends to a character $\tilde{\psi}_{M}$ of $G_r(\psi_M)$, the stabilizer of $\psi_M$ in $G_r$.
    \item There is an irreducible character $\rho$ of $G_r(\psi_M)/K^l$ such that $\chi=\operatorname{Ind}_{G_r(\psi_M)}^{G_r}(\tilde{\psi}_{M}\rho)$.
\end{enumerate} 

\end{theorem}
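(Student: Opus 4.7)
The approach is standard Clifford theory applied to the abelian normal subgroup $K^l$. Since $r=2l$, the commutator $[K^l,K^l]\subseteq K^{2l}=1$, so $K^l$ is abelian and $\psi_M$ is a linear character. I would derive Part (2) from Part (1) by the usual Clifford correspondence: once an extension $\tilde{\psi}_M$ of $\psi_M$ to $T:=G_r(\psi_M)$ has been produced, any other extension differs by a character of $T/K^l$, so every irreducible character of $T$ above $\psi_M$ has the form $\tilde{\psi}_M\rho$ with $\rho\in\Irr(T/K^l)$, and Clifford's theorem then says the irreducibles of $G_r$ above $\psi_M$ are exactly the inductions $\Ind_T^{G_r}(\tilde{\psi}_M\rho)$. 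So the real content is Part (1), the existence of the extension.

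\textbf{Setting up the extension.} Because $r=2l$, the conjugation action of $G_r$ on $K^l$ factors through $G_l=G_r/K^l$, and identifying $K^l$ with $\mfg_l$ via $1+\pi^l X\leftrightarrow X$, it becomes the usual conjugation action on $\M_N(\mfO_l)$. The characters of $K^l$ take the form $\psi_M(1+\pi^l X)=\psi(\tr(MX))$ for a fixed additive character $\psi$ of $\mfO_l$, and the $G_l$-orbit of $\psi_M$ is the conjugacy class of $M$. Consequently $T/K^l$ is identified with the centralizer $C_{G_l}(M)$, which by the stability hypothesis (after passage to the unramified extension $\hat{\mfO}_{l'}$) is the centralizer of a matrix $A+\pi B$ with $A$ in Jordan form and $B\in Z(C_{\mfg_l}(A))$.

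\textbf{Constructing $\tilde{\psi}_M$.} I would exhibit an explicit lift $L\subseteq T$ of the centralizer $C_{G_l}(M)$ using the block structure of $A$: the centralizer of a single Jordan block $J_n(\lambda)$ is the polynomial algebra (intersected with units) generated by the block, which admits an obvious lift from $\mfO_l$ to $\mfO_r$; summing over blocks and combining with the central perturbation from $B\in Z(C(A))$ gives a subgroup $L\subseteq T$ mapping isomorphically onto $T/K^l$. Then I would define $\tilde{\psi}_M(\ell\cdot k)=\mu(\ell)\,\psi_M(k)$ for $\ell\in L$, $k\in K^l$, where $\mu:L\to\C^\times$ is built from the trace pairing with $M$ (for instance, $\mu(\ell)=\psi(\tr(M\cdot\log(\ell)))$ on the pro-$p$ part of $L$, extended by a multiplicative character on the finite reductive part). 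The key verifications are that $\mu$ is a genuine character of $L$ and that the formula is independent of the decomposition $\ell\cdot k$, equivalently that the $2$-cocycle in $H^2(T/K^l,\C^\times)$ attached to $\psi_M$ is trivial. Finally a Galois-descent argument brings $\tilde{\psi}_M$ back from $\hat{\mfO}$ to $\mfO$.

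\textbf{Expected main obstacle.} The genuine difficulty is verifying the $2$-cocycle vanishes, or equivalently that the section $L$ can be chosen compatibly with $\psi_M$. For a general matrix $M$ the centralizer in $G_l$ need not lift to $T$ in a way consistent with $\psi_M$, producing a nontrivial Schur multiplier. The stability condition (Jordan form modulo $\pi$, central correction in $Z(C(A))$) is precisely the structural input that makes the block-by-block lift go through, since each block-centralizer is a commutative polynomial ring and the cross-block obstructions vanish for central $B$. This is where I expect to spend the bulk of the argument.
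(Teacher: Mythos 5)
Your reduction of (2) to (1) via Clifford--Gallagher is fine and matches the paper, but the plan for (1) contains a genuine gap: it rests on producing a subgroup $L\subseteq G_r(\psi_M)$ mapping \emph{isomorphically} onto $G_r(\psi_M)/K^l\cong C_{G_l}(M)$, i.e.\ on the extension $1\to K^l\to G_r(\psi_M)\to C_{G_l}(M)\to 1$ being split. In general it is not, and the failure occurs precisely in stable cases covered by the theorem. Take $M$ a scalar matrix (which is stable), so $G_r(\psi_M)=G_r$: for $\mathcal{O}=\Z_p$ with $p$ odd and $l\geq 2$, the central element $(1+p)I$ of $G_l$ has order $p^{l-1}$, but any $g=(1+p)I+p^lY\in G_{2l}$ lifting it satisfies $g^{p^{l-1}}\equiv I+up^{l}I+p^{2l-1}Y'\not\equiv I \pmod{p^{2l}}$ with $u$ a unit, so no subgroup of $G_{2l}$ maps isomorphically onto $G_{2l-l}=G_l$ and your $L$ cannot exist. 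The same problem hits the single-block step: there is no ring (or multiplicative) section of $\mathcal{O}_r\to\mathcal{O}_l$ (e.g.\ none of $\mathbb{F}_p[t]/t^{2l}\to\mathbb{F}_p[t]/t^{l}$), so the ``obvious lift'' of $\mathcal{O}_l[J_n(\lambda)]^\times$ is not available. Two further points: when $A$ has several Jordan blocks with the same eigenvalue (scalar $A$ included), the centralizer is \emph{not} the direct sum of single-block centralizers --- by \cref{lemma:singla} it contains off-diagonal rectangular Toeplitz blocks --- so the proposed $L$ would not even surject onto $T/K^l$ and is not commutative; and the recipe $\mu(\ell)=\psi(\tr(M\log\ell))$ is unavailable in general, since \cref{theorem:Ieven} has no hypothesis on $p$ and $\log$ makes no sense over $\mathbb{F}_p[t]/t^r$ or sufficiently ramified $\mathcal{O}$.

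You correctly identify the crux as the vanishing of the obstruction to extending $\psi_M$, but the paper establishes it without any section of $T\to T/K^l$. It writes $G_r(\psi_M)=K^lH$ with $H=C_{G_r}(\mu_{l',r}(M))$ (\cref{proposition:stabilizer}); note $H\cap K^l\neq 1$, so this is a product decomposition, not a complement. By \cref{lemma:extension}, a $G_r(\psi_M)$-invariant linear character of $K^l$ extends to $K^lH$ if and only if it is trivial on $[H,H]\cap K^l$, and this triviality is what gets checked: elements of $[H,H]$ have determinant $1$, so \cref{lemma:determinant} forces $\Tr(X)\equiv 0 \bmod \mathfrak{p}^l$ for $I+\pi^lX\in[H,H]\cap K^l$; since $X$ is block upper-Toeplitz (\cref{lemma:singla}) and $B\in Z(C_{\mathfrak{g}_l}(A))$ has a single diagonal value on each block (\cref{theorem:center}), one gets $\tr(MX)=\lambda\Tr(X)+\pi b\Tr(X)\equiv 0$, first for split primary $A$, then for split $A$ via \cref{lemma:blockcentral} and \cref{lemma:commutator}; the non-split case follows simply by restricting a one-dimensional extension built over an unramified extension back to $G_r(\psi_M)$ --- no Galois descent or cocycle splitting is needed. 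To repair your argument you would have to abandon the complement $L$ and instead prove directly that $\psi_M$ kills $[H,H]\cap K^l$, which is exactly where the stability hypothesis (central $B$, constant diagonal) enters.
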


As we will see in subsequent sections, the representation theory of $G_r$ is much harder when $r$ is odd compared to when $r$ is even. When $r$ and $p$ are odd, we give a construction of some of the stable representations of $G_r$, which we call super stable representations of $G_r$. For each $\psi_M$, there is a subgroup $J_M$, with $K^l \subseteq J_M \subseteq K^{l'}$, such that any irreducible representations of $K^{l'}$ above $\psi_M$ is equal to $\sigma=\Ind_{J_{M}}^{K^{l'}}(\psi_{\hat{M}})$ with $\psi_{\hat{M}}$ an extension of $\psi_{M}$, where $\hat{M} \in \mathfrak{g}_{l}$ such that $M=\hat{M} \mod{\pi^{l'}}$.  We define an irreducible character $\chi$ of $G_r$ to be \emph{super stable} if $\chi $ is above $\sigma = \Ind_{J_{M}}^{K^{l'}}(\psi_{\hat{M}})$ with $\hat{M}$ a stable matrix of $\mathfrak{g}_{l}$. In \cref{Section:6}, we give the precise definition of super stable irreducible character of $G_r$ for when $r$ is odd, and we prove:

\begin{theorem}
\label{theorem:Iodd}
Assume that $\mathcal{O}$ has residue field of characteristic $p>2$ and $r=2l-1$. Let $\chi$ be a super stable irreducible character of $G_r$ above an irreducible character $\sigma$ of $K^{l'}$. Then:
\begin{enumerate}
    \item $\sigma$ extends to a character $\tilde{\sigma}$ of $G_r(\sigma)$, the stabilizer of $\sigma$ in $G_r$.
    \item There is an irreducible character $\rho$ of $G_r(\sigma)/K^{l'}$ such that $\chi=\operatorname{Ind}_{G_r(\sigma)}^{G_r}(\tilde{\sigma}\rho)$.
\end{enumerate} 
\end{theorem}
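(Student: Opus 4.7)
The plan is to adapt the Clifford-theoretic argument of \cref{theorem:Ieven} to the odd case. The essential new complication is that when $r=2l-1$, the congruence kernel $K^{l'}=K^{l-1}$ is no longer abelian: the commutator subgroup $[K^{l'},K^{l'}]$ lies in $K^{r-1}\subseteq K^{l}$, so after pairing with $\psi_M$ the quotient $K^{l'}/K^{l}$ inherits a symplectic structure and $\sigma$ is realised as a Heisenberg-type (Stone--von Neumann) representation, with $J_M$ serving as a Lagrangian polarisation. This is precisely why $\sigma$ need not be one-dimensional, and why part (1) requires genuinely more than an abelian extension argument.

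For part (1), the first step is to identify the stabiliser $G_r(\sigma)$ explicitly. Because $K^{l'}$ is normal in $G_r$, Mackey's formula reduces the problem to describing those $g\in G_r$ for which $\psi_{\hat M}^g$ is $K^{l'}$-conjugate to $\psi_{\hat M}$; combined with the super stability of $\hat M$, this describes $G_r(\sigma)$ modulo $K^{l'}$ in terms of the centraliser of a Jordan-canonical matrix, paralleling the description of $G_r(\psi_M)$ used in \cref{theorem:Ieven}. To produce the extension $\tilde\sigma$, the idea is to extend $\psi_{\hat M}$ first from $J_M$ to the intermediate group $J_M\cdot\bigl(G_r(\sigma)\cap G_r(\psi_{\hat M})\bigr)$ and then induce the resulting one-dimensional character up to $G_r(\sigma)$. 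The extension at the intermediate stage is obtained by invoking (the proof of) \cref{theorem:Ieven} applied to the stable matrix $\hat M\in\mfg_{l}$---this is exactly the content of the super stability hypothesis---while the assumption $p>2$ splits the symplectic 2-cocycle on $K^{l'}/K^{l}$ coming from commutators paired with $\psi_M$, yielding a Lagrangian compatible with the stabiliser action.

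Once $\tilde\sigma$ is constructed, part (2) is immediate from the Clifford correspondence combined with Gallagher's theorem: the former identifies $\Irr(G_r\mid\sigma)$ with $\Irr(G_r(\sigma)\mid\sigma)$ via induction, and the latter, applied to the pair $K^{l'}\triangleleft G_r(\sigma)$ with extension $\tilde\sigma$, identifies $\Irr(G_r(\sigma)\mid\sigma)$ with $\{\tilde\sigma\rho:\rho\in\Irr(G_r(\sigma)/K^{l'})\}$. Composing yields the desired formula $\chi=\Ind_{G_r(\sigma)}^{G_r}(\tilde\sigma\rho)$.

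The main obstacle I anticipate is stage (1): verifying that the character on $G_r(\sigma)$ produced by induction from $J_M\cdot\bigl(G_r(\sigma)\cap G_r(\psi_{\hat M})\bigr)$ is irreducible of the correct dimension and actually restricts to $\sigma$ on $K^{l'}$. This demands a careful Mackey decomposition inside $G_r(\sigma)$ that uses the explicit block structure of the centraliser supplied by the stability of $\hat M$ to rule out all double-coset contributions except the trivial one. The hypothesis $p>2$ cannot easily be dispensed with, since the symplectic formalism producing $\sigma$ and its extension genuinely breaks down in residue characteristic $2$.
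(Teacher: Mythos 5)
Your overall architecture is the paper's: realize $\sigma$ as the Heisenberg lift $\Ind_{J_M}^{K^{l'}}(\psi_{\hat M})$, identify $G_r(\sigma)$ with $G_r(\psi_M)=K^{l'}C_{G_r}(\mu_{l,r}(\hat M))$ (this is the paper's unnumbered lemma, which uses the uniqueness statement in \cref{proposition:krakovski}), and deduce part (2) from the Clifford correspondence together with Gallagher's theorem (\cref{thm:Clifford}(2),(3)). Part (2) of your argument is fine. The gap is in your construction of $\tilde\sigma$ in part (1), and it is exactly the point you flag as the ``main obstacle'' but leave unresolved. Inducing a one-dimensional character from $J_M\cdot\bigl(G_r(\sigma)\cap G_r(\psi_{\hat M})\bigr)$ up to $G_r(\sigma)$ is an extension of $\sigma$ only if that subgroup, together with $K^{l'}$, covers $G_r(\sigma)$ (otherwise the degree exceeds $[K^{l'}:J_M]$ and the restriction to $K^{l'}$ cannot be $\sigma$). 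That covering amounts to choosing a maximal isotropic subspace $J_M/K^l$ invariant under an entire complement of $K^{l'}$ in $G_r(\sigma)$, essentially under the full centralizer $C_{G_r}(\mu_{l,r}(\hat M))$, and no such Lagrangian is known to exist in general. The paper avoids needing one: by \cite[Lemma 4.8]{stasinski2017regular} it suffices to extend $\sigma$ to $K^{l'}P$ for $P$ a Sylow $p$-subgroup of $G_r(\sigma)$; a $P$-invariant Lagrangian does exist by \cite[Lemma 3.4]{krakovski2018regular}; and \cref{lemma:J} reduces the problem to extending $\psi_{\hat M}$ to $J_MP$. Without this Sylow reduction (or some substitute), your Mackey analysis of the double cosets has nothing to rule out the extra contributions.

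The second problem is your claim that the intermediate extension is obtained ``by invoking (the proof of) \cref{theorem:Ieven} applied to $\hat M$.'' That theorem extends the character $\psi_M$ of the abelian group $K^l$ in the even case, where $\det(I+\pi^l X)=1+\pi^l\Tr(X)$ and the relevant quantity is $\Tr(MX)$. Here one must extend $\psi_{\hat M}$, whose formula carries the quadratic correction $x-\tfrac12\pi^{l'}x^2$, and for odd $r$ the determinant of $I+\pi^{l'}X$ has a genuine second-order term (\cref{lemma:determinant}); showing $\Tr\bigl(\hat M(X-\tfrac12\pi^{l'}X^2)\bigr)\equiv 0 \pmod{\mathfrak p^{l}}$ for $I+\pi^{l'}X\in[P,P]\cap J_M$ requires the new computation of \cref{proposition:M'extends}, including the identity $\sum_{i\neq j}T_{i,i}T_{j,j}/2\equiv-\Tr(X^2)/2$ and the three cases (split primary, split, and non-split via an unramified base change, which your sketch omits). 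Super stability of $\hat M$ is an input to that computation via \cref{theorem:center} and \cref{lemma:extension}; it is not a license to quote the even-case result. So the proposal is the right strategy in outline but is missing the two ingredients that make part (1) work: the reduction to a Sylow $p$-subgroup with a $P$-invariant polarization, and the odd-case trace/determinant computation for $\psi_{\hat M}$.
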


We briefly define the strongly semisimple representations of $G_r$ here as they are related to the stable representations. Recall we denote an irreducible representation of $K^l$ as $\psi_M$, where $M$ is a matrix with entries in $\mathcal{O}_{l'}$. Semisimple representations of $G_r$ correspond to representations above $\psi_M$ such that the image of $M \mod \mathfrak{p}$ is a semisimple matrix. In \cite[Proposition 2.3]{hill1995semisimple}, Hill gives an additive Jordan decomposition in the ring $\mathfrak{g}_{l'}$. Specifically, given a matrix $M \in \mathfrak{g}_{l'}$, it can be written as $M=s+n$ with commuting matrix $s$ semisimple and $n$ nilpotent. A matrix $s \in \mathfrak{g}_{l'}$ is called semisimple if it is conjugate to a diagonal matrix with diagonal entries in the multiplicative system of representatives in $\mathcal{O}_{l'}$ or zero.  Strongly semisimple representations of $G_r$ correspond to semisimple representations above $\psi_M$ for $M=s+n \in \mathfrak{g}_{l'}$ such that $s$ is a semisimple  matrix and $n$ a nilpotent element of $Z(C_{\M_{N}(\mathcal{{O}}_{l'})}(s))$. Note that if such $M$ is in Jordan canonical form, possibly over an unramified ring extension, then $s$ is a diagonal semisimple matrix and $n$ is a diagonal nilpotent matrix, so $n \in \pi \mathfrak{g}_{l'}$ \cite[In Proof of Proposition 3.3]{hill1995semisimple}. Thus, Hill's construction of strongly semisimple representations falls under the family of stable representations where the stable matrix $M$ is conjugate to a matrix $A+\pi B$ with the additional assumption that $A$ is a semisimple matrix. Thus, when $r$ is even, we generalize Hill's construction of strongly semisimple representations of $G_r$ \cite[Proposition 3.3]{hill1995semisimple}.

For the reader's benefit, we point out that a regular representation corresponds to a representation of $G_r$ above a $\psi_M$ for $M$ a regular matrix. Note that the matrix $\begin{bmatrix}
  1 & 1\\ 
  0 & 1+\pi
\end{bmatrix}$  is a regular matrix which is not stable. We also note that for $r=2$, all irreducible representations of $G_2$ are stable. 

\subsection*{Organization of the paper}
\Cref{Section:2} contains notation and preliminary remarks. For example, we state Clifford's theorem, which we use later to prove \cref{theorem:Ieven} and \cref{theorem:Iodd}. In \Cref{Section:3}, we review the similarity classes of $\M_N(\F_q)$. We also describe the center of the centralizer of nilpotent Jordan Blocks over a general commutative ring. In \Cref{Section:4}, we build upon the description of the centralizer of stable matrices to determine the stabilizer of the stable representations of $K^l$. In \Cref{Section:5}, we prove \cref{theorem:Ieven}. Thus, giving a complete construction of stable representations of $G_r$ when $r$ is even. In \Cref{Section:6}, we prove \cref{theorem:Iodd}. Thus, giving a complete construction of super stable representations of $G_r$ when $r$ is odd and $p$ is not $2$. In \Cref{Section:7}, we talk about a problem with the construction of strongly semisimple representations when $r$ is odd and hence the necessity of introducing super stable representations.

\subsection*{Acknowledgements}

I am extremely grateful to my advisor George McNinch for his invaluable guidance and support throughout this project. I also thank Alexander Stasinski for reading a draft of this article and providing valuable and constructive feedback. Lastly, I would like to thank the referee for their suggestions that improved both the content and the exposition in this paper. This research was partially supported by Thesis Writing Fellowship, Noah Snyder's NSF CAREER Grant DMS-1454767.

\section{Background and Notation}
\label{Section:2}
We will fix the following notations: For a group $G$ with $g,h \in G$ we denote the group commutator $ghg^{-1}h^{-1}$ by $[g,h]$ and the conjugation $ghg^{-1}$ by $^gh$.

\subsection{Rings and Fields}
Let $\mathrm{F}$ be a non-Archimedean local field with ring of integers $\mathcal{O}$. Let $\mathfrak{p}$ be the maximal ideal of $\mathcal{O}$ and $\pi$ be a fixed generator of $\mfp$. Assume that the residue field $\mathcal{O}/ \mfp$ is finite, say $k=\F_{q}$, of characteristic $p$. For $r \geq 1$, we will denote $\mathcal{O}_r$ to be $\mathcal{O}/ \mfp^r$  and $\eta_r$ for the map $\eta_r:\mathcal{O} \to \mathcal{O}_r$. Given, $\eta_1:\mathcal{O} \to k$, there is a section $\mu: k \to \mathcal{O}$ satisfying $\mu(ab)=\mu(a)\cdot\mu(b)$. We will denote $\mu_r: k \to \mathcal{O}_r$ by composing $\mu$ with $\eta_r$. Any element $a$ of $\mathcal{O}$ has a unique
representation as a series $\sum_{i=0}^{\infty} \mu(a_i)\pi^i$, where $a_i$ is an element of $k$. Similarly any 
element $a$ of $\mathcal{O}_r$ has a unique
representation as a finite series $\sum_{i=0}^{r-1} \mu_r(a_i)\pi^i$, where $a_i$ is an element of $k$. For $s \leq r$, we will fix the following section:

\begin{align*}
  \mu_{s,r}:\mfO_s & \longrightarrow \mfO_r\\
 \mu_{s,r}(\sum_{i=0}^{s-1} \mu_s(a_i)\pi^i) & =\sum_{i=0}^{s-1} \mu_r(a_i)\pi^i.
\end{align*}

\subsection{Groups}
We will be concerned with general linear groups over these various fields and rings. We will denote them as follow: Fix an integer $N\geq2$ and, for any $r\geq1$, let
\begin{align*}
G_{r} & =\GL_{N}(\mathcal{O}_{r}),\\
\mfg_{r} & =\M_{N}(\mathcal{O}_{r}),
\end{align*}
where we use $\M_{N}(R)$ to denote the algebra of $N\times N$ matrices over $R$ a commutative ring. 

From here on, we consider a fixed $r\geq2$. For any integer $i$ such that $r\geq i\geq1$, let $\rho_{i}:G_{r}\rightarrow G_{i}$ be the surjective homomorphism induced by the canonical map $\mathcal{O}_{r}\rightarrow\mathcal{O}_{i}$,
and write $K^{i}=\Ker\rho_{i}$. By computation, one can check that $K^{i}=I+\mfp^{i}\mfg_{r}$ with $I$ denoting the identity matrix. We also write $\rho_{i}$
for the corresponding homomorphism $\mfg_{r}\rightarrow\mfg_{i}$. We thus have a descending chain of  normal subgroups:
\[
G_{r}\supset K^{1}\supset\dots\supset K^{r}=\{I\}.
\]

\subsection{Clifford Theory}
We will use Clifford's theory to relate representations of the normal subgroups $K^i$ to representations of $G_r$. We will denote $\Irr(G)$ for the set of irreducible representations of a finite group $G$. In general, we have if $G$ is a finite group, with $H\subseteq G$ a subgroup and $\rho\in\Irr(H)$, we will write $\Irr(G\mid\rho)$ for the set of $\pi\in\Irr(G)$ such
that $\pi$ contains $\rho$ on restriction to $H$, i.e.,
\[
\Irr(G\mid\rho)=\{\pi\in\Irr(G)\mid\langle\pi|_{H},\rho\rangle\neq0\}.
\]
Moreover, given $N$ a normal subgroup of $G$, then $G$ acts on $N$ and thus on $\Irr(N)$ by $\rho\mapsto{}^{g}\rho$, where $^{g}\rho(n):=\rho(gng^{-1})$, for $g\in G$, $n\in N$. In this case, we define the \emph{stabilizer}
of $\rho\in\Irr(N)$ to be $G(\rho)=\{g\in G\mid{}^{g}\rho\cong\rho\}$. We will later make use of the following results from Clifford's theory of finite groups:

\begin{theorem}

\label{thm:Clifford}Let $G$ be a finite group, and $N$ a normal subgroup. Then the following hold:

\begin{enumerate}
\item \label{Clifford1} If $\pi\in\Irr(G)$, then $\pi|_{N}=e(\bigoplus_{\rho\in\Omega}\rho)$,
where $\Omega\subseteq\Irr(N)$ is an orbit under the action of $G$
on $\Irr(N)$ by conjugation, and $e$ is a positive integer.
\item \label{Clifford2}Suppose that $\rho\in\Irr(N)$. Then $\theta\mapsto\Ind_{G(\rho)}^{G}\theta$
is a bijection from $\Irr(G(\rho)\mid\rho)$ to $\Irr(G\mid\rho)$.
\item \label{Clifford3}Let $H$ be a subgroup of $G$ containing $N$,
and suppose that $\rho\in\Irr(N)$ has an extension $\tilde{\rho}$
to $H$ (i.e., $\tilde{\rho}|_{N}=\rho$). Then 
\[
\Ind_{N}^{H}\rho=\bigoplus_{\chi\in\Irr(H/N)}\tilde{\rho}\chi,
\]
where each $\tilde{\rho}\chi$ is irreducible, and where we have identified
$\Irr(H/N)$ with $\{\chi\in\Irr(H)\mid\chi(N)=1\}$.
\end{enumerate}
\end{theorem}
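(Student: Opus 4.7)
The plan is to prove the three parts in the stated order, since (2) leverages (1) applied to the stabilizer $G(\rho)$. All three statements are classical, and the standard approach requires only Frobenius reciprocity, Schur's lemma, and Mackey's decomposition theorem.

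For (1), I would start with an irreducible constituent $\rho$ of $\pi|_N$ and its isotypic component $V_\rho$ inside the representation space $V$ of $\pi$. Since $N$ is normal in $G$, for any $g \in G$ and $n \in N$ one has $\pi(n)\pi(g) = \pi(g)\pi(g^{-1}ng)$, so $\pi(g)$ carries $V_\rho$ onto the ${}^g\rho$-isotypic component, giving $\pi(g)V_\rho = V_{{}^g\rho}$. Hence $W = \sum_{g \in G} V_{{}^g\rho}$ is a $G$-stable subspace of $V$, and irreducibility of $\pi$ forces $W = V$. The set of constituents of $\pi|_N$ is therefore exactly the orbit $\Omega = G \cdot \rho$, and since $\pi(g)$ restricts to an isomorphism $V_\rho \iso V_{{}^g\rho}$ all multiplicities agree with a common value $e$.

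For (2), the map $\theta \mapsto \Ind_{G(\rho)}^G \theta$ lands in $\Irr(G \mid \rho)$ by Frobenius reciprocity once the induced representation is known to be irreducible. Irreducibility follows from Mackey's criterion: for $g \notin G(\rho)$ it suffices to show that $\theta$ and ${}^g\theta$ share no constituent on restriction to $G(\rho) \cap {}^g G(\rho)$, and this is visible already on $N$ because $\theta|_N$ is $\rho$-isotypic while $({}^g\theta)|_N$ is ${}^g\rho$-isotypic with ${}^g\rho \not\cong \rho$. Injectivity of the induction map is another Frobenius reciprocity and Mackey calculation in the same spirit. For surjectivity I would invoke (1): given $\pi \in \Irr(G \mid \rho)$, the $\rho$-isotypic component of $\pi|_N$ is $G(\rho)$-stable, and choosing any irreducible $G(\rho)$-subrepresentation $\theta$ in it and comparing restrictions to $N$ shows $\pi \cong \Ind_{G(\rho)}^G \theta$.

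Part (3) is the cleanest step. Since $\tilde\rho$ extends $\rho$, the projection formula gives $\Ind_N^H \rho = \Ind_N^H(\tilde\rho|_N) = \tilde\rho \otimes \Ind_N^H \mathbf{1}_N$, and $\Ind_N^H \mathbf{1}_N$ is the regular representation of $H/N$ inflated to $H$, hence equals $\bigoplus_{\chi \in \Irr(H/N)} \chi$. Each summand $\tilde\rho\chi$ restricts to $\rho$ on $N$ since $\chi$ is trivial there, so irreducibility of $\rho$ forces irreducibility of $\tilde\rho\chi$. The main obstacle is the surjectivity half of (2): one needs (1) to know in advance how $\pi|_N$ decomposes, and then to verify that passing to the $\rho$-isotypic component and inducing back really reconstructs $\pi$ rather than merely a representation containing it; the Mackey irreducibility check is the most calculation-intensive piece but it is mechanical once disjointness at the level of $N$ is established.
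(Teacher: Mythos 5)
The paper itself offers no proof of this theorem; it simply cites Isaacs (6.2, 6.11, 6.17), so there is no in-paper argument to compare against. Your sketches of parts (1) and (2) follow the standard Clifford--Mackey route (isotypic components permuted by $G$, Mackey's irreducibility criterion detected already on $N$, surjectivity via the $\rho$-isotypic component) and are sound in outline.

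Part (3), however, contains a genuine gap: this is Gallagher's theorem, and your irreducibility argument does not work. First, $\Ind_N^H\mathbf{1}_N$ is the regular character of $H/N$ inflated to $H$, which is $\bigoplus_{\chi\in\Irr(H/N)}\chi(1)\,\chi$, not $\bigoplus_\chi\chi$; each $\chi$ occurs with multiplicity $\chi(1)$. More seriously, when $\chi(1)>1$ the restriction $(\tilde\rho\chi)|_N$ equals $\chi(1)\,\rho$, not $\rho$, and in any case a character whose restriction to $N$ is $\rho$-isotypic need not be irreducible (for instance $\tilde\rho\cdot\lambda$, with $\lambda$ the regular character of $H/N$, restricts $\rho$-isotypically but is reducible whenever $H\neq N$), so ``irreducibility of $\rho$ forces irreducibility of $\tilde\rho\chi$'' is a false inference. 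The missing step is the counting argument: since $\rho$ extends to $H$ it is $H$-invariant, so $(\Ind_N^H\rho)|_N=|H:N|\,\rho$ and Frobenius reciprocity gives $\langle\Ind_N^H\rho,\Ind_N^H\rho\rangle_H=|H:N|=\sum_\chi\chi(1)^2$; writing $\Ind_N^H\rho=\sum_\chi\chi(1)\,\tilde\rho\chi$ and expanding the norm, equality forces each $\tilde\rho\chi$ to be irreducible and the products to be pairwise distinct. This is not a cosmetic point for the paper: in Theorems 1.1 and 1.2 the quotient $G_r(\psi_M)/K^l$ is in general nonabelian, so the characters $\chi$ of $H/N$ that occur there genuinely have dimension greater than one, and your argument would only cover the abelian-quotient case.
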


For proofs of the above, see \cite{isaacs2006character}, 6.2, 6.11, and 6.17, respectively.


\subsection{Structure and Characters of $K^{l}$}

The following lemma gives us information about the structure of the normal subgroups $K^i$. 

\begin{lemma}
\label{lemma:Kernel}
\cite[Proposition 2.1]{HILL1995610} Given $G_r$, we have:
\begin{enumerate}
    \item If $i \geq r/2$ then $K^i$ is abelian and the map $I+\pi^{i}x \mapsto \rho_{r-i}(x)$ is an isomorphism from $K^i$ to the additive group $\mfg_{r-i}$.
    \item For any $i \leq r-1$ the map $I+\pi^{i}x \mapsto \bar{x}$ induces an isomorphism from $K^i/K^{i+1}$ to $\mfg_{1}$,  where the bars denote reductions mod $\mfp$.
\end{enumerate}
\end{lemma}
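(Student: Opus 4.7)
The plan is to unwind the definition $K^{i}=I+\mathfrak{p}^{i}\mathfrak{g}_{r}$ and do the two matrix computations that make both statements transparent. The only non-formal input is the observation that $\pi^{r}=0$ in $\mathcal{O}_{r}$, so any cross term of the form $\pi^{a}\cdot\pi^{b}$ with $a+b\geq r$ vanishes. Everything else follows by elementary ring arithmetic in $\mathfrak{g}_{r}$.

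For part (1), I would first check that the map $\Phi\colon I+\pi^{i}x\mapsto \rho_{r-i}(x)$ is well-defined. If $I+\pi^{i}x=I+\pi^{i}y$ in $K^{i}$, then $\pi^{i}(x-y)=0$ in $\mathfrak{g}_{r}$, which forces $x-y\in\pi^{r-i}\mathfrak{g}_{r}$, hence $\rho_{r-i}(x)=\rho_{r-i}(y)$ in $\mathfrak{g}_{r-i}$. Conversely, lifting any $\bar{x}\in\mathfrak{g}_{r-i}$ back to $x\in\mathfrak{g}_{r}$ shows surjectivity, and the same computation shows injectivity. The multiplication in $K^{i}$ is
\[
(I+\pi^{i}x)(I+\pi^{i}y)=I+\pi^{i}(x+y)+\pi^{2i}xy.
\]
Since $i\geq r/2$ gives $2i\geq r$, we have $\pi^{2i}=0$ in $\mathcal{O}_{r}$, so the product collapses to $I+\pi^{i}(x+y)$. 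This is symmetric in $x$ and $y$, so $K^{i}$ is abelian, and $\Phi$ is a homomorphism (hence an isomorphism) from $K^{i}$ onto the additive group $\mathfrak{g}_{r-i}$.

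For part (2), the strategy is the same but I do not need the vanishing $\pi^{2i}=0$; I only need to pass to the quotient by $K^{i+1}$. The map $\Psi\colon I+\pi^{i}x\mapsto\bar{x}\in\mathfrak{g}_{1}$ is well-defined because distinct representatives $x,y$ differ by something in $\pi^{r-i}\mathfrak{g}_{r}\subseteq\pi\mathfrak{g}_{r}$ (using $i\leq r-1$). Using the product formula above and the fact that $2i\geq i+1$ (since $i\geq 1$), the cross term $\pi^{2i}xy$ lies in $\pi^{i+1}\mathfrak{g}_{r}$, so modulo $K^{i+1}$ we again get the additive law; thus $\Psi$ is a homomorphism. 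An element $I+\pi^{i}x$ lies in $\ker\Psi$ iff $\bar{x}=0$, i.e.\ $x\in\pi\mathfrak{g}_{r}$, i.e.\ $I+\pi^{i}x\in K^{i+1}$; surjectivity is immediate by lifting. Hence $\Psi$ induces the claimed isomorphism $K^{i}/K^{i+1}\iso\mathfrak{g}_{1}$.

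There is no serious obstacle: both parts reduce to the single observation that the quadratic term in $(I+\pi^{i}x)(I+\pi^{i}y)$ lives in a high enough power of $\mathfrak{p}$ to be negligible, either absolutely (for (1)) or modulo $K^{i+1}$ (for (2)). The only care needed is tracking well-definedness, which rests on identifying the representative $x$ only up to $\pi^{r-i}\mathfrak{g}_{r}$.
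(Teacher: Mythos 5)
Your proof is correct; the paper gives no argument of its own for this lemma, citing Hill's Proposition 2.1, and your elementary computation --- the product formula $(I+\pi^{i}x)(I+\pi^{i}y)=I+\pi^{i}(x+y)+\pi^{2i}xy$, with the cross term vanishing outright when $2i\geq r$ and absorbed into $K^{i+1}$ when $i\geq 1$, together with the well-definedness check that the representative $x$ is only determined modulo $\pi^{r-i}\mathfrak{g}_{r}$ --- is exactly the standard argument behind that citation. No gaps.
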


Note by the previous lemma when $i\geq r/2$, then $K^{i}$ is abelian, and if we let $l=\lceil \frac{r}{2} \rceil$ is the maximal abelian group among the kernels $K^{i}$. From now on, we let $i\geq r/2$, that is, $i\geq l$. We will thus describe $\Irr(K^i)$.

Given $\mathrm{F}$ be the fraction field of $\mfo$, we fix an additive character $\psi:\mathrm{F}\rightarrow\mathbb{C}^{\times}$ which is trivial on $\mfo$ but not on $\mfp^{-1}$. For each $r\geq1$, we can view $\psi$ as a character of the additive group $\mathrm{F}/\mfp^{r}$ whose kernel contains $\mfo_{r}$. We will use $\psi$ and the trace form $(x,y)\mapsto\tr(xy)$ on $\mfg_{r}$ to define a character of $K^i$. For $M \in\M_{N}(\mathfrak{o}_{r})$, define a homomorphism $\psi_{M}:K^{i}\rightarrow\mathbb{C}^{\times}$ by

\begin{equation*}
\label{eq:character-psi-b}
\psi_{M}(I+\pi^{i}x)=\psi(\pi^{-r}\tr(M\pi^{i}x))=\psi(\pi^{-r+i}\tr(M x)),
\end{equation*}

for $x\in \mfg_{r}$. Since $\psi$ is trivial
on $\mfo_{r}$, $\psi_{M}$ only depends on $x$ and $M \mod\mfp^{r-i}$. Thanks to the non-degeneracy of the trace form, the map $M \mapsto\psi_{M}$ is a homomorphism whose kernel is $\mfp^{r-i}\mfg_{r}$. Thus it induces an isomorphism
\[
\mfg_{r}/\mfp^{r-i}\mfg_{r}\longiso\Irr(K^{i}),
\]
where we will usually identify $\mfg_{r}/\mfp^{r-i}\mfg_{r}$ with
$\mfg_{r-i}$. For $g\in G_{r}$ we have 
\[
\psi_{gMg^{-1}}(x)=\psi(\pi^{i-r}\tr(gM g^{-1}x))=\psi(\pi^{i-r}\tr(M g^{-1}xg))=\psi_{M}(g^{-1}xg).
\]
Thus the bijection above transforms the action of $G_{r}$ on $\mfg_{r}$ into (the inverse) conjugation of characters. We  summarize the discussion above in the following lemma:

\begin{lemma}
\label{lemma:Hillstabilizer}
\cite[Proposition 2.3]{HILL1995610}
Let $r/2 \leq i \leq r-1 $:
\begin{enumerate}
    \item The map $M \to \psi_M$ is a $G_r$-equivariant bijection from $M_n(\mathcal{O}_{r-i})$ to $Irr(K^i)$
    \item The stabilizer of $\psi_M$ in $G_r$ is $\rho_{r-i}^{-1}(C_{G_{r-i}}(M))$
    \end{enumerate}
\end{lemma}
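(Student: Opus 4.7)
The plan is essentially to package the preparatory discussion immediately preceding the lemma into a clean bijection-plus-equivariance argument, and then to deduce the stabilizer description as a one-line corollary. I would organize the proof in three steps: (a) verify that $M \mapsto \psi_M$ descends to an injective homomorphism from $\mfg_{r-i}$ to $\Irr(K^i)$, (b) upgrade injectivity to bijectivity by a cardinality count, (c) compute directly the action of $G_r$ on $\psi_M$ and read off the stabilizer.

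For step (a), the key technical point is the non-degeneracy of the trace pairing: if $\psi_M = 1$, then $\pi^{i-r}\tr(Mx) \in \mfo$ for every $x \in \mfg_r$, since $\psi$ is trivial on $\mfo$ but not on $\mfp^{-1}$; equivalently $\tr(Mx) \in \mfp^{r-i}$ for all $x \in \mfg_r$. Plugging in the elementary matrices $x = e_{ij}$ forces every entry of $M$ to lie in $\mfp^{r-i}$, so $M \in \mfp^{r-i}\mfg_r$. This gives an injection $\mfg_{r-i} \hookrightarrow \Irr(K^i)$. For step (b), \cref{lemma:Kernel} tells us that when $i \geq r/2$ the group $K^i$ is abelian and isomorphic as an abelian group to $\mfg_{r-i}$; in particular $|\Irr(K^i)| = |K^i| = q^{N^2(r-i)} = |\mfg_{r-i}|$, so the injection of step (a) is a bijection.

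For step (c), the direct computation already recorded in the preamble shows
\[
\psi_{gMg^{-1}}(I + \pi^i x) = \psi(\pi^{i-r}\tr(gMg^{-1} x)) = \psi(\pi^{i-r}\tr(M\, g^{-1}xg)) = \psi_M(g^{-1}(I+\pi^i x)g),
\]
using cyclicity of the trace, so $^g\psi_M = \psi_{gMg^{-1}}$; this is exactly the $G_r$-equivariance in (1). Part (2) then follows immediately: $g \in G_r(\psi_M)$ iff $\psi_{gMg^{-1}} = \psi_M$, iff, by the bijection of (1), $gMg^{-1} \equiv M \pmod{\mfp^{r-i}}$, iff $\rho_{r-i}(g)$ centralizes $M$ in $G_{r-i}$. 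Hence $G_r(\psi_M) = \rho_{r-i}^{-1}(C_{G_{r-i}}(M))$.

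There is no substantial obstacle here: the content is all in the non-degeneracy of the trace pairing between $K^i$ and $\mfg_{r-i}$ induced by $\psi$, which is purely formal given the hypotheses on $\psi$. The only point one must be mildly careful about is that conjugation by $g \in G_r$ on a lift of $M$ in $\mfg_r$ descends under $\mfo_r \to \mfo_{r-i}$ to conjugation by $\rho_{r-i}(g) \in G_{r-i}$ on $M$, but this is just the functoriality of matrix multiplication with respect to the ring projection.
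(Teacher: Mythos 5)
Your argument is correct and follows essentially the same route as the paper, which records exactly this discussion (non-degeneracy of the trace pairing giving the kernel $\mfp^{r-i}\mfg_r$, hence the identification $\mfg_{r-i}\iso\Irr(K^i)$, plus the conjugation computation) before citing Hill. The only slip is a harmless convention issue: with the paper's definition ${}^g\rho(n)=\rho(gng^{-1})$ your computation actually shows ${}^g\psi_M=\psi_{g^{-1}Mg}$ (the paper's ``inverse conjugation'' remark), which changes nothing in the orbit/stabilizer conclusions.
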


Thus, by \cref{lemma:Hillstabilizer}, to apply Clifford's theorem, it is enough to pick characters $\psi_M$ of $K^l$, where $M$ runs through a set of representatives of the
conjugacy classes in $\mfg_{l'}$. 


\section{The center of the centralizer of nilpotent Jordan blocks}
\label{Section:3}

In this section, we recall some known results about the centralizer of nilpotent Jordan blocks. Then, we use those results to describe the center of the centralizer of those matrices. We also discuss Jordan's canonical form for split matrices, i.e., matrices whose characteristic polynomials split over $\F_q$.




If $A_i$’s for $1 \leq i \leq l$ are matrices, then we denote by $A=\oplus_{i=1}^l A_{i}$ the
block diagonal matrix with entries in a commutative ring of the form:

\begin{equation*}
\begin{bmatrix}
A_1 & 0 &  \cdots & 0 \\
0 & A_2 &  \cdots & 0 \\
\vdots  & \vdots &  \ddots & \vdots  \\
 0   & 0 & \cdots &  A_l
\end{bmatrix}.
\end{equation*} 

We call this a direct sum of matrices. Similarly, if $\mathcal{A}_1,\mathcal{A}_2, \cdots ,\mathcal{A}_l$ are sets of matrices then $\mathcal{A}=\oplus_{i=1}^l \mathcal{A}_{i}$ denotes the set of matrices  $\{ A=\oplus_{i=1}^l A_{i}| A_i \in \mathcal{A}_i \}$.




We will start by describing the primary decomposition of matrices under the action of conjugation.  For this, we will consider Jordan blocks $J_n(a)$ over commutative rings with identity. Recall that for matrices over $\mathbb{F}_{q}$, every split matrix is similar to a direct sum of Jordan blocks $\oplus_{i=1} J_{\lambda(a_i)}(a_i)$ with $a_i\neq a_j$ for all $i \neq j$. Note each Jordan block $J_{\lambda(a_i)}(a_i)$ can be written as $a_iI+N$ with $N$ a nilpotent matrix. Such a matrix $a_iI+N$ is called a split primary matrix in Jordan canonical form. Thus, the centralizer of $J_{\lambda(a_i)}(a_i)= a_iI+N$ only depends on $N$. Moreover, we can write $N=\oplus_{i=1}^l N_{n_i}$, with $l \geq 1$, as a direct sum of principal nilpotent matrices, which we define:

\begin{definition}
(Principal Nilpotent Matrix) A square matrix of order $n$ is called principal nilpotent if it is of the form:

\begin{equation*}
N_n:=\begin{bmatrix}
0 & 1 & 0 & 0 & \cdots & 0 \\
0 & 0 & 1 & 0 & \cdots & 0 \\
0 & 0 & 0 & 1 &  &  \\
\vdots  & \vdots &  & \ddots & \ddots & \vdots  \\
  & &  &  & 0 & 1 \\
0   & 0 & \cdots &  & 0 & 0 
\end{bmatrix}
\end{equation*} 

\end{definition}

The centralizer of a nilpotent matrix $N=\oplus_{i=1}^l N_{n_i}$ depends on the following family of matrices. 

\begin{definition}
(Upper Toeplitz Matrix) A square matrix of order $n$ over a ring $R$ is
called \emph{upper Toeplitz}, denoted as $T_{n \times n}$ if it is of the form

\begin{equation*}
T_{n \times n} = \begin{bmatrix}
a_1 & a_2 & \cdots & a_{n-1} & a_{n}  \\
0 & a_1 & a_2 &  & a_{n-1} \\
\vdots  & \ddots & \ddots & \ddots & \vdots  \\
  & \cdots &  & a_1 & a_2 \\
0   & \cdots & 0 & 0 & a_1 
\end{bmatrix}
\end{equation*} 

\end{definition}

Any such upper Toeplitz matrix will be denoted as $T_{n \times n}=(a_1,a_2, \dots, a_n)$. For $l \leq n$, given $T_{n \times n}=(a_1,a_2, \dots, a_n)$, we define $T_{(n \times n)_l}:=(a_1, \dots, a_l)$, i.e., the upper Toeplitz matrix of order $l$ with the same entries as $T_{n \times n}$ up to the $l^\text{th}$ coefficient.

\begin{definition}
(Rectangular Upper Toeplitz Matrix) A matrix of order
$n \times m$ over a ring $R$ is called a \emph{rectangular upper Toeplitz matrix}, denoted as $T_{n \times m}$, if it is of the form

$$\begin{bmatrix} 
0_{n \times m-n}   & T_{n \times n}
\end{bmatrix} \text{if $n \leq m$ or } 
\begin{bmatrix} 
 T_{m\times m} \\
 0_{n-m \times m}   &
\end{bmatrix} \text{if $n \geq m$}$$

where $T_{s \times s}$, for a natural number $s$, is the upper Toeplitz matrix of order $s$.
\end{definition}

Considering this type of matrices, we have: 

\begin{lemma}
\label{lemma:singla}
\cite[Lemma 4.9]{SinglaPooja2010Orog} Let $n_1, n_2, \cdots , n_l$ be a sequence of natural numbers, such that
$n = n_1 + n_2 + \cdots + n_l$. Let $R$ be a commutative ring with identity, and let
$A=\oplus_{i=1}^l N_{n_i}$. Then the centralizer, $C_{\M_n(R)}(A)$, of $A$ in $\M_n(R)$ consists of all matrices of the form:

\begin{equation*}
\begin{bmatrix}
T_{n_1 \times n_1} & T_{n_1 \times n_2} & \cdots & T_{n_1 \times n_l} \\
T_{n_2 \times n_1} & T_{n_2 \times n_2} & \cdots & T_{n_2 \times n_l} \\
\vdots  & \vdots  & \ddots & \vdots  \\
T_{n_l \times n_1} & T_{n_l \times n_2} & \cdots & T_{n_l \times n_l} 
\end{bmatrix}
\end{equation*} 
which are called \emph{block upper-Toeplitz} matrices.
\end{lemma}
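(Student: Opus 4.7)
The plan is to exploit the block structure of $A = \oplus_{i=1}^l N_{n_i}$. Writing any $X \in \M_n(R)$ in block form $X = (X_{ij})_{1 \le i,j \le l}$ compatibly with the decomposition $n = n_1 + \cdots + n_l$, so that each block $X_{ij}$ has size $n_i \times n_j$, the equation $AX = XA$ splits into the independent block equations
\[
N_{n_i} X_{ij} = X_{ij} N_{n_j}, \qquad 1 \le i, j \le l.
\]
Thus the lemma reduces to the single claim that a rectangular matrix $Y = (y_{ij})$ of size $p \times q$ over $R$ satisfies $N_p Y = Y N_q$ if and only if $Y$ is a rectangular upper Toeplitz matrix in the sense of the preceding definition.

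For this reduced claim, I would compute both sides directly. Left multiplication by $N_p$ shifts the rows of $Y$ up by one position and inserts zeros in the last row, while right multiplication by $N_q$ shifts the columns of $Y$ one position to the right and inserts zeros in the first column. Equating entries produces the three relations
\begin{align*}
y_{i+1,\,j} &= y_{i,\,j-1} && (1 \le i < p,\ 2 \le j \le q), \\
y_{i+1,\,1} &= 0 && (1 \le i < p), \\
y_{p,\,j-1} &= 0 && (2 \le j \le q).
\end{align*}
The first relation says that $y_{i,j}$ is constant along each diagonal $\{(i,j) : j - i = c\}$, so $Y$ is completely determined by one value per diagonal. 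The other two relations force every entry of the first column below the top row, and every entry of the last row to the left of the rightmost column, to vanish; via diagonal constancy this annihilates entire diagonals.

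Assembling these constraints according to the sign of $q - p$ shows that the surviving (possibly nonzero) diagonals are indexed by $q - p \le d \le q - 1$ when $p \le q$, and by $0 \le d \le q - 1$ when $p \ge q$. In the first case this gives
\[
Y = \begin{bmatrix} 0_{p \times (q-p)} & T_{p \times p} \end{bmatrix},
\]
and in the second case
\[
Y = \begin{bmatrix} T_{q \times q} \\ 0_{(p-q) \times q} \end{bmatrix},
\]
which are precisely the two shapes in the definition of a rectangular upper Toeplitz matrix. Conversely, any matrix of these two shapes satisfies the three relations by direct inspection. The proof is not conceptually deep; the main obstacle is the careful bookkeeping of boundary indices near the first column and last row, and checking that the two regimes $p \le q$ and $p \ge q$ combine cleanly into the stated block upper-Toeplitz form.
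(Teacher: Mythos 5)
Your argument is correct and complete. Note, however, that the paper does not prove this statement at all: it is quoted verbatim from Singla \cite[Lemma 4.9]{SinglaPooja2010Orog}, so there is no in-paper proof to compare against. Your write-up supplies exactly the standard verification one would expect behind that citation: the block decomposition of $AX=XA$ into the intertwining equations $N_{n_i}X_{ij}=X_{ij}N_{n_j}$ is immediate from the block-diagonal shape of $A$, and your analysis of the single rectangular equation $N_pY=YN_q$ via the row-shift/column-shift description of multiplication by a principal nilpotent is accurate, including the boundary relations that kill the first column below the top entry and the last row left of the final entry. The only point worth making explicit if you write this out in full is that the vanishing of those boundary entries propagates to entire diagonals because each diagonal of the rectangle is a contiguous chain linked by the relation $y_{i+1,j}=y_{i,j-1}$, and that every ``low'' diagonal (index $d<0$, respectively $d\le q-p-1$) actually meets the first column or the last row; both checks are easy and your index ranges $q-p\le d\le q-1$ (for $p\le q$) and $0\le d\le q-1$ (for $p\ge q$) are right, recovering precisely the two rectangular upper Toeplitz shapes in the paper's definition. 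So your proof is a legitimate, self-contained elementary substitute for the external reference, at the cost only of the index bookkeeping you flag.
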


To classify the center of the centralizer of $A$, we need the following lemma.

\begin{lemma}
\label{lemma:matrixcomp}
Given a commutative ring $R$ with identity:

\begin{enumerate}
    \item  Let $V_{n_j\times n_i}$ and $V_{n_i\times n_j}$ be rectangular upper Toeplitz matrices. If $T_{n_i\times n_j} V_{n_j\times n_i}= V_{n_i\times n_j} T_{n_j\times n_i}$ for all pair of matrices $T_{n_i\times n_j}$ and  $T_{n_j\times n_i}$ then $V_{n_j\times n_i} =0$ and $V_{n_i\times n_j} =0$. 
    \item Let $V_{n_j\times n_j}$ and $S_{n_j\times n_j}$ be upper Toeplitz matrices. If $T_{n_j\times n_j} V_{n_j\times n_j}= S_{n_j\times n_j} T_{n_j\times n_j}$ for all matrices $T_{n_j\times n_j}$ then $V_{n_j\times n_j} =S_{n_j\times n_j}$.
\end{enumerate}
\end{lemma}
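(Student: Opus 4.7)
My plan is to handle both parts by direct manipulation of the parameters of upper Toeplitz matrices. The key observation I would use is that the set of $n\times n$ upper Toeplitz matrices forms a commutative subring of $\M_n(R)$, isomorphic to $R[x]/(x^n)$ via $(a_1,\ldots,a_n)\leftrightarrow a_1+a_2 x+\cdots+a_n x^{n-1}$. Under this identification, rectangular upper Toeplitz matrices correspond to $R[x]$-module homomorphisms between two such truncated polynomial rings, and their products become polynomial multiplications with suitable truncations. This turns each matrix equation into a system of polynomial identities in the entries of the $V$'s and $T$'s.

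Part (2) is immediate: the identity matrix is upper Toeplitz (with parameters $(1,0,\ldots,0)$), so substituting $T_{n_j\times n_j}=I$ into $T_{n_j\times n_j}V_{n_j\times n_j}=S_{n_j\times n_j}T_{n_j\times n_j}$ collapses both sides and yields $V_{n_j\times n_j}=S_{n_j\times n_j}$.

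For part (1), I would assume without loss of generality that $n_i\ge n_j$; the other case follows by interchanging the roles of $i$ and $j$, which leaves the statement invariant. Writing $V_{n_j\times n_i}=(v_1,\ldots,v_{n_j})$ and $V_{n_i\times n_j}=(u_1,\ldots,u_{n_j})$, the plan is to choose, for each $m\in\{1,\ldots,n_j\}$, the ``elementary'' rectangular upper Toeplitz matrices with a single nonzero parameter in the $m$th slot, corresponding under the polynomial identification to a single power of $x$. Plugging such pairs into the hypothesis produces, for each $m$, a linear relation among the entries of the two $V$'s; letting $m$ vary, and using that the parameters of $T_{n_i\times n_j}$ and $T_{n_j\times n_i}$ may be chosen independently, gives a system of equations that forces $u_m=v_m=0$ for all $m$.

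I expect the main obstacle to be careful bookkeeping across the two regimes $n_i>n_j$ and $n_i<n_j$: in the rectangular upper Toeplitz $T_{n_i\times n_j}$, the zero block sits either on the left or at the bottom depending on this comparison, so the parameters of $V$ appear in different entries of the products $T_{n_i\times n_j}V_{n_j\times n_i}$ and $V_{n_i\times n_j}T_{n_j\times n_i}$. Handling both cases symmetrically, and verifying that in each case every parameter of $V$ can in fact be isolated by an appropriate choice of $T$'s (rather than being washed out by the zero blocks), is where the real care is needed.
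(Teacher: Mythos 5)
Your part (2) is correct: the identity matrix is upper Toeplitz, and substituting $T_{n_j\times n_j}=I$ gives the claim at once. Your polynomial model is also a sound way to organize part (1), and in the regime $n_i\ge n_j$ your plan works (even more simply than you describe: take $T_{n_i\times n_j}$ to be the rectangular upper Toeplitz matrix with parameters $(1,0,\dots,0)$ and $T_{n_j\times n_i}=0$ to force $V_{n_j\times n_i}=0$, then the reverse choice to force $V_{n_i\times n_j}=0$).

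The genuine gap is the step ``without loss of generality $n_i\ge n_j$.'' Interchanging $i$ and $j$ does \emph{not} leave the statement invariant: the hypothesis equates products of size $n_i\times n_i$, in which $V_{n_j\times n_i}$ is multiplied on the left by a $T$ and $V_{n_i\times n_j}$ on the right, whereas after the swap one would be equating $n_j\times n_j$ products with the $V$'s on the opposite sides --- a different (and, when $n_j>n_i$, strictly stronger) hypothesis. In fact, for $n_i<n_j$ the statement as written fails, so no amount of bookkeeping completes your argument there: take $n_i=1$, $n_j=2$, $V_{2\times 1}=\begin{bmatrix}1\\0\end{bmatrix}$, $V_{1\times 2}=\begin{bmatrix}0&0\end{bmatrix}$; every $T_{1\times 2}$ has the form $\begin{bmatrix}0&t\end{bmatrix}$ and every $T_{2\times 1}$ the form $\begin{bmatrix}s\\0\end{bmatrix}$, so $T_{1\times 2}V_{2\times 1}=0=V_{1\times 2}T_{2\times 1}$ for all choices, yet $V_{2\times 1}\neq 0$. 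This is exactly the ``washing out by the zero blocks'' you worried about: when $n_i<n_j$ the products only see the truncation of the parameters (e.g.\ for $n_i=2$, $n_j=3$ only the first parameter of each $V$ is constrained, and for $n_j\ge 2n_i$ nothing is). The lemma is therefore only valid when the products are formed on the side of the larger (or equal) block, i.e.\ with $n_i\ge n_j$, and that is also all that is needed: in the proof of \cref{theorem:center} the centrality of $V$ yields the displayed relation for both orderings of each pair $\{i,j\}$, so one applies the lemma to the ordering in which the larger block comes first. Drop the symmetry reduction, add the hypothesis $n_i\ge n_j$, and your sketch for that case goes through.
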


\begin{proof}
The proof follows by matrix computation. 
\end{proof}

Using the previous two lemmas, we obtain the following description of the center of the centralizer of $A=\oplus_{i=1}^l N_{n_i}$:

\begin{theorem}
\label{theorem:center}
Let $A=\oplus N_{n_i}$ where $N_{n_i}$ are principal nilpotent matrices over a commutative ring $R$ with identity. Then any matrix in the center of the centralizer of $A$, denoted as $Z(C_{\M_n(R)}(A))$, has the form: \begin{equation*}
V = 
\begin{bmatrix}
V_{n_1 \times n_1} & 0 & \cdots & 0 \\
0 & V_{n_2 \times n_2} & \cdots & 0 \\
\vdots  & \vdots  & \ddots & \vdots  \\
0 & 0 & \cdots & V_{n_l \times n_l} 
\end{bmatrix}
\end{equation*} 

where each $V_{n_j \times n_j}$ is an Upper Toeplitz Matrix such that given $n_s\geq n_j$ then $V_{n_j \times n_j}=V_{(n_s \times n_s)_{n_j}}$. 
\end{theorem}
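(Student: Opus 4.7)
The plan is to establish the stated form of $V \in Z(C_{\M_n(R)}(A))$ by commuting $V$ against a small family of carefully chosen test elements of the centralizer, invoking \cref{lemma:singla} and \cref{lemma:matrixcomp}. Since $A \in C_{\M_n(R)}(A)$, any $V$ in the center of this centralizer in particular commutes with $A$, so $V \in C_{\M_n(R)}(A)$; by \cref{lemma:singla}, $V$ is block upper Toeplitz, with each block $V_{n_a \times n_b}$ a rectangular upper Toeplitz matrix of size $n_a \times n_b$. It then remains to show that the off-diagonal blocks vanish and that the diagonal blocks satisfy the truncation identity $V_{n_j \times n_j} = V_{(n_s \times n_s)_{n_j}}$ whenever $n_s \ge n_j$.

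For block-diagonality, fix $i \neq j$ and let $T_{n_i \times n_j}$, $T_{n_j \times n_i}$ be arbitrary rectangular upper Toeplitz matrices of the indicated sizes. Consider the element $U \in C_{\M_n(R)}(A)$ whose only nonzero blocks are the $(i,j)$ block $T_{n_i \times n_j}$ and the $(j,i)$ block $T_{n_j \times n_i}$; this lies in the centralizer by \cref{lemma:singla}. Reading off the $(j,j)$ block of the identity $VU = UV$ yields
\begin{equation*}
V_{n_j \times n_i}\, T_{n_i \times n_j} \;=\; T_{n_j \times n_i}\, V_{n_i \times n_j}
\end{equation*}
for every such pair $T_{n_i \times n_j}, T_{n_j \times n_i}$. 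This is precisely the hypothesis of \cref{lemma:matrixcomp}(1) (with the roles of $i$ and $j$ interchanged), so $V_{n_i \times n_j} = V_{n_j \times n_i} = 0$. Ranging over all $i \neq j$ then gives $V = \bigoplus_{i=1}^{l} V_{n_i \times n_i}$.

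To extract the compatibility, fix $i \neq j$ and take the simpler test element $U' \in C_{\M_n(R)}(A)$ whose unique nonzero block is the $(i,j)$ block, equal to an arbitrary rectangular upper Toeplitz $T_{n_i \times n_j}$. Because $V$ is now block diagonal, comparing the $(i,j)$ blocks of $V U' = U' V$ gives $V_{n_i \times n_i}\, T_{n_i \times n_j} = T_{n_i \times n_j}\, V_{n_j \times n_j}$ for every such $T_{n_i \times n_j}$. Specializing to the ``identity-like'' rectangular upper Toeplitz with leading coefficient $1$ and all other Toeplitz coefficients $0$ --- explicitly $[\, 0_{n_i \times (n_j - n_i)} \mid I_{n_i} \,]$ when $n_i \le n_j$, and the stacked shape with $I_{n_j}$ above a zero block when $n_i > n_j$ --- a direct block multiplication shows the first $\min(n_i, n_j)$ Toeplitz coefficients of $V_{n_i \times n_i}$ and $V_{n_j \times n_j}$ must coincide, which in the paper's truncation notation is exactly $V_{n_j \times n_j} = V_{(n_s \times n_s)_{n_j}}$ for $n_s \ge n_j$. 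I do not foresee a serious obstacle: the argument is block-matrix bookkeeping powered by the two available auxiliary lemmas, the only delicate step being this final specialization of $T_{n_i \times n_j}$ that converts the single commutation identity into the truncation statement about Toeplitz coefficients.
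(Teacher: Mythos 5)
Your proposal is correct and takes essentially the same route as the paper's proof: reduce $V$ to block upper Toeplitz form via \cref{lemma:singla}, kill the off-diagonal blocks by commuting against centralizer elements supported only on the $(i,j)$ and $(j,i)$ blocks and applying \cref{lemma:matrixcomp}(1), and then read the truncation identity off the intertwining relation $V_{n_i\times n_i}T_{n_i\times n_j}=T_{n_i\times n_j}V_{n_j\times n_j}$. The only (harmless) deviation is in this last step, where the paper lets $T_{n_j\times n_j}$ range over all upper Toeplitz matrices and invokes \cref{lemma:matrixcomp}(2), while you specialize to the single identity-like rectangular Toeplitz matrix and verify $V_{n_j\times n_j}=V_{(n_s\times n_s)_{n_j}}$ by direct multiplication, which works equally well.
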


\begin{proof}
Let $V \in Z(C_{\M_n(R)}(A))$. Since $V$ is in the centralizer of $A$, it is a block upper-Toeplitz matrix, by \cref{lemma:singla}, of the form:

\begin{equation*}
\begin{bmatrix}
V_{n_1 \times n_1} & V_{n_1 \times n_2} & \cdots & V_{n_1 \times n_l} \\
V_{n_2 \times n_1} & V_{n_2 \times n_2} & \cdots & V_{n_2 \times n_l} \\
\vdots  & \vdots  & \ddots & \vdots  \\
V_{n_l \times n_1} & V_{n_l \times n_2} & \cdots & V_{n_l \times n_l} 
\end{bmatrix}
\end{equation*} 

First, we will show that $V_{n_i \times n_j}=0$ whenever $i \neq j$. Note for any $T \in C_{\M_n(R)}(A)$ we have $C=VT=TV$. Thus we have: 

\begin{equation}
\label{6equation:matrix}
    C_{n_i \times n_i}=\sum_{k=1}  T_{n_i \times n_k} V_{n_k \times n_i}=\sum_{k=1}  V_{n_i \times  n_k} T_{n_k \times n_i}  
\end{equation}

Note \cref{6equation:matrix} works for any $T$, so we can make the assumption that $T_{n_i \times n_k}=0$ and $T_{n_k \times n_i}=0$ for $k \neq j$. So \cref{6equation:matrix} becomes  $C_{n_i \times n_i}= T_{n_i\times n_j} V_{n_j\times n_i}= V_{n_i\times n_j} T_{n_j\times n_i}$. Since this works for all pair of matrices $T_{n_i\times n_j}$ and  $T_{n_j\times n_i}$, we have that $V_{n_i\times n_j}=0$ by \cref{lemma:matrixcomp}.

Thus $V$ is a block diagonal matrix. We are left to show that given $n_s\geq n_j$ then $V_{n_j \times n_j}=V_{(n_s \times n_s)_{n_j}}$. Since $V_{n_i \times n_j}=0$ whenever $i \neq j$, we have $C_{n_s\times n_j}= T_{n_s\times n_j} V_{n_j\times n_j}= V_{n_s\times n_s} T_{n_s\times n_j}$. Thus, we have:

\begin{equation*}
        C_{n_s\times n_j}=\begin{bmatrix}
           C'_{n_j\times n_j}\\           
           0
          \end{bmatrix}=\begin{bmatrix}
           T_{n_j\times n_j}\\           
           0
          \end{bmatrix} \cdot V_{n_j\times n_j}= V_{n_s\times n_s} \cdot 
          \begin{bmatrix}
           T_{n_j\times n_j}\\           
           0
         \end{bmatrix}
     \end{equation*}

Note by matrix computation $C'_{n_j\times n_j}=T_{n_j\times n_j}V_{n_j\times n_j}=V_{(n_s \times n_s)_{n_j}} T_{n_j\times n_j}$.  Since this works for all matrices $T_{n_j\times n_j}$, we have that $V_{n_j\times n_j}=V_{(n_s \times n_s)_{n_j}}$ by \cref{lemma:matrixcomp}. Thus, $V$ has the proposed form.
\end{proof}

\section{Stabilizer of a stable irreducible character of $K^l$}

\label{Section:4}

In this section, we will be concerned with finding the stabilizer of irreducible characters of $K^l$ coming from stable matrices. Recall that any such irreducible character is defined as $\psi_{M}:K^{l}\rightarrow\mathbb{C}^{\times}$ by $\psi_{M}(I + \pi^{l}x)=\psi(\pi^{l'}\tr(M x))$ for a specific matrix $M$ in $\mathfrak{g}_{l'}$ and the stabilizer of $\psi_M$ in $G_r$ is $\rho_{l'}^{-1}(C_{G_{l'}}(M))$ by \cref{lemma:Hillstabilizer}. We will show that when $M$ is a stable matrix, $G_r(\psi_M)=K^{l'}C_{G_r}(\mu_{l',r}(M))$ where $\mu_{l',r}(M)$ is a matrix with entries in $\mfO_r$ obtained from the section $\mu_{l',r}: \mfO_{l'} \to \mfO_r$. Thus, $M = \mu_{l',r}(M) \mod{\mathfrak{p}^{l'}}$.

Recall that any element $a$ of $\mfO_{l'}$ has a unique
representation as a finite series $\sum_{i=0}^{l'-1} \mu_{l'}(a_i)\pi^i$, where $a_i$ is an element of $k$. Similarly, any matrix $M$ with entries in $\mfO_{l'}$ has a unique representation as a finite series. Thus, we will write any matrix $M$ with entries in $\mfO_{l'}$ as $A+\pi B$ with $A=\mu_{l'}(\bar{M})$ where the bars denote reductions mod $\mfp$. If $\bar{M}$ is in Jordan canonical form, we have that $A=\mu_{l'}(\bar{M})$ is in Jordan canonical form. Thus, note that by possibly passing to an unramified extension, any matrix of $\mathfrak{g}_{l'}$ is conjugate to a matrix $A+\pi B$ such that $A$ is in Jordan canonical form. 

\begin{definition}
Let $M$ be  a matrix in $\mathfrak{g}_{s}$, for arbitrary integer $s\geq 1$. We call $M$ a stable matrix if by possibly passing to an unramified extension $\mathcal{\hat{O}}_{s}$ of $\mathcal{O}_{s}$ we have that $M$ is conjugate to a matrix $A+\pi B$ with entries in $\mathcal{\hat{O}}_{s}$ such that $A$ is a matrix in Jordan canonical form and $B \in Z(C_{\mathfrak{g}_{s}}(A))$.
\end{definition}

For this set of matrices over $\mathfrak{g}_{l'}$,, we will study the stabilizer of $\psi_M$ in $G_r$  which is $\rho_{l'}^{-1}(C_{G_{l'}}(M))$. We recall a general fact about centralizers of matrices. 

\begin{lemma}
\label{lemma:blockcentral} \cite[Lemma 4.5]{SinglaPooja2010Orog}
Let $R$ be a commutative ring with unity. Let $a_1, a_2, \cdots, a_l$
be elements of $R$ such that for all $i \neq j$, $a_i-a_j$ is invertible in $R$. Let $A=\oplus_{i=1}^l A_{i}$
be a square matrix of size $n$, where $A_i$’s are upper triangular
matrices of size $n_i$. Assume that all diagonal entries of $A_i$ are equal to $a_i$. Then, $C_{GL_n(R)}(A) = \oplus_{i=1}^l
C_{GL_{n_i} (R)}(A_i)$.
\end{lemma}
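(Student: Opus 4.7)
Let $X\in C_{\GL_n(R)}(A)$. Decompose $X=(X_{ij})_{1\le i,j\le l}$ into blocks matching the block structure of $A$, where $X_{ij}$ is of size $n_i\times n_j$. Translating $XA=AX$ block by block, the relation is equivalent to $X_{ij}A_j=A_iX_{ij}$ for every pair $(i,j)$. Write $A_i=a_iI_{n_i}+N_i$ where $N_i$ is strictly upper triangular, so $N_i^{n_i}=0$. The first objective is to show $X_{ij}=0$ whenever $i\ne j$; the second is to check that the diagonal blocks $X_{ii}$ individually lie in $\GL_{n_i}(R)$. The converse inclusion is immediate from the fact that a direct sum of commuting invertible blocks is invertible and commutes with $A$.

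For the off-diagonal vanishing, rearrange $X_{ij}A_j=A_iX_{ij}$ into
\[
(a_i-a_j)X_{ij}=X_{ij}N_j-N_iX_{ij}.
\]
Since $a_i-a_j\in R^\times$ by hypothesis, one can solve for $X_{ij}$ and substitute the expression into itself repeatedly. The operator $L:Y\mapsto N_iY-YN_j$ on $n_i\times n_j$ matrices is the difference of two commuting operators (left multiplication by $N_i$ and right multiplication by $N_j$), each nilpotent, so by the binomial theorem $L^{n_i+n_j-1}=0$. Iterating the substitution gives
\[
X_{ij}=(a_i-a_j)^{-k}\,L^k(X_{ij})
\]
for every $k$, and taking $k=n_i+n_j-1$ forces $X_{ij}=0$. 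This is the technical core of the argument, and the hypothesis that the differences $a_i-a_j$ are units is exactly what lets the iteration work over a general commutative ring (where one cannot appeal to eigenvalue considerations).

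With the off-diagonal blocks killed, $X=\bigoplus_{i=1}^l X_{ii}$ is block diagonal and each $X_{ii}$ commutes with $A_i$. It remains to see that each $X_{ii}$ is itself invertible. Since $X$ is block diagonal and lies in $\GL_n(R)$, write $X^{-1}=(Y_{ij})$; the equation $XX^{-1}=I_n$ yields $X_{ii}Y_{ii}=I_{n_i}$ and $X_{ii}Y_{ij}=0$ for $i\ne j$, while $X^{-1}X=I_n$ gives $Y_{ii}X_{ii}=I_{n_i}$. Hence $X_{ii}$ is a two-sided inverse of $Y_{ii}$ in $\M_{n_i}(R)$, which also forces $Y_{ij}=0$ for $i\ne j$. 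Therefore $X_{ii}\in \GL_{n_i}(R)$ for every $i$, and $X\in\bigoplus_{i=1}^l C_{\GL_{n_i}(R)}(A_i)$, completing the proof.
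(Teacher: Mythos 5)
Your argument is correct. Note that the paper does not prove this statement at all: it is quoted verbatim from Singla (Lemma 4.5 of the cited reference), so there is no in-paper proof to compare against; your write-up supplies a complete, self-contained proof of exactly the kind one would expect. The block translation $X_{ij}A_j=A_iX_{ij}$, the reduction to $(a_i-a_j)X_{ij}=X_{ij}N_j-N_iX_{ij}$, the nilpotency of the commuting left/right multiplication operators giving $L^{n_i+n_j-1}=0$, and the observation that a block-diagonal invertible matrix has invertible diagonal blocks (read off from $XX^{-1}=X^{-1}X=I$) are all sound, and this is precisely where the hypothesis that the differences $a_i-a_j$ are units enters, replacing any eigenvalue argument over a field. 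One cosmetic slip: with your definition $L(Y)=N_iY-YN_j$, the relation reads $X_{ij}=(-1)^k(a_i-a_j)^{-k}L^k(X_{ij})$ (or define $L(Y)=YN_j-N_iY$ to match your displayed formula); the sign is harmless since the conclusion only uses $L^{n_i+n_j-1}=0$.
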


We use \cref{lemma:blockcentral} to prove:

\begin{lemma}
\label{lemma:centralsplit}
Let $M$ be a stable matrix of $\mathfrak{g}_{s}$ such that $M=A+\pi B$ with $B \in Z(C_{\mathfrak{g}_{s}}(A))$ and $\bar{A}$ split matrix in Jordan canonical form then $C_{G_s}(M)=C_{G_s}(A)$.
\end{lemma}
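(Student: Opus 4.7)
The inclusion $C_{G_s}(A)\subseteq C_{G_s}(M)$ is immediate: any $g\in C_{G_s}(A)\subseteq C_{\mfg_s}(A)$ commutes with $B\in Z(C_{\mfg_s}(A))$ by the definition of the center, and hence with $M=A+\pi B$.

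For the reverse inclusion, the plan is to prove the stronger algebraic identity $\mathcal{O}_s[M]=\mathcal{O}_s[A]$ as $\mathcal{O}_s$-subalgebras of $\mfg_s$, from which the equality of centralizers follows at once. Since $\bar A$ is split and in Jordan canonical form and $A=\mu_s(\bar M)$, I would first group the Jordan blocks of $A$ by eigenvalue, writing $A=\bigoplus_j A_j$ with $A_j=\hat\alpha_j I+N_j$, where each $N_j$ is a direct sum of principal nilpotents and the Teichm\"uller lifts $\hat\alpha_j\in\mathcal{O}_s$ are pairwise distinct, so $\hat\alpha_j-\hat\alpha_{j'}\in\mathcal{O}_s^\times$ whenever $j\neq j'$. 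Applying \cref{lemma:blockcentral} gives $C_{\mfg_s}(A)=\bigoplus_j C_{\mfg_{n_j}}(N_j)$; taking centers and invoking \cref{theorem:center} yields $Z(C_{\mfg_s}(A))=\bigoplus_j \mathcal{O}_s[N_j]$, and a Chinese Remainder Theorem argument applied to the pairwise-coprime ideals $\bigl((X-\hat\alpha_j)^{n_{\max,j}}\bigr)\subseteq\mathcal{O}_s[X]$ identifies this with $\mathcal{O}_s[A]$. In particular, $B=P(A)$ for some $P\in\mathcal{O}_s[X]$, so $M=A+\pi P(A)\in\mathcal{O}_s[A]$ and consequently $\mathcal{O}_s[M]\subseteq\mathcal{O}_s[A]$.

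To upgrade this inclusion to equality, I would apply Nakayama's lemma to the finitely generated $\mathcal{O}_s$-module $\mathcal{O}_s[A]$ and its submodule $\mathcal{O}_s[M]$. Since $M-A=\pi P(A)\in\pi\mathcal{O}_s[A]$ and $A$ commutes with $B=P(A)$, the binomial expansion of $q(M)=q(A+\pi P(A))$ (valid because $A,B$ commute) shows that $q(M)\equiv q(A)\pmod{\pi\mathcal{O}_s[A]}$ for every $q\in\mathcal{O}_s[X]$; hence the image of $\mathcal{O}_s[M]$ in $\mathcal{O}_s[A]/\pi\mathcal{O}_s[A]$ is all of $\mathcal{O}_s[A]/\pi\mathcal{O}_s[A]$, and Nakayama forces $\mathcal{O}_s[M]=\mathcal{O}_s[A]$. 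Any matrix in $\mfg_s$ commuting with $M$ thus commutes with every element of $\mathcal{O}_s[A]$, in particular with $A$, giving $C_{\mfg_s}(M)\subseteq C_{\mfg_s}(A)$ and, after intersecting with $G_s$, the desired inclusion $C_{G_s}(M)\subseteq C_{G_s}(A)$.

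The main technical obstacle will be the identification $Z(C_{\mfg_s}(A))=\mathcal{O}_s[A]$: this combines the Toeplitz compatibility of \cref{theorem:center} with the CRT glueing across distinct eigenvalue groups, and crucially requires the split hypothesis so that the $\hat\alpha_j$'s lie in $\mathcal{O}_s$ and their pairwise differences are units. Once this algebraic identification is secured, the Nakayama step is essentially formal.
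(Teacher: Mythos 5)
Your proof is correct, but it takes a genuinely different route from the paper. The paper proves the hard inclusion $C_{G_s}(M)\subseteq C_{G_s}(A)$ by induction on $s$: it first treats the split primary case, writes $g\in C_{G_s}(M)$ as $g=\mu_s(\bar g)+\pi g_2$, observes that $\mu_s(\bar g)$ is block upper-Toeplitz and hence commutes with $A$ and $M$, and then applies the inductive hypothesis to $M\bmod\mathfrak p^{s-1}$ (which stays stable by \cref{theorem:center}) to handle $g_2$; the general split case is then assembled from the primary blocks via \cref{lemma:blockcentral}. You instead prove the stronger structural fact $\mathcal{O}_s[M]=\mathcal{O}_s[A]$: \cref{theorem:center} identifies $Z(C_{\mfg_s}(A))$ within each eigenvalue group with the Toeplitz algebra $\mathcal{O}_s[N_j]$, the unit differences of the Teichm\"uller lifts let CRT glue these into $\mathcal{O}_s[A]$, so $B=P(A)$ and $M\in\mathcal{O}_s[A]$, and the Nakayama (or, since $\pi$ is nilpotent in $\mathcal{O}_s$, an elementary lifting) step gives $A\in\mathcal{O}_s[M]$, whence equality of the full algebra centralizers $C_{\mfg_s}(M)=C_{\mfg_s}(A)$ and, a fortiori, of the group centralizers. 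Your argument avoids the induction on $s$ altogether and yields the algebra-level statement in one stroke (which the paper's induction in fact quietly uses, since its $g_2$ need not be invertible); the paper's argument is more hands-on but stays entirely at the level of the group centralizer and the cited lemmas. One small caveat: \cref{lemma:blockcentral} is stated for centralizers in $\GL_n(R)$, while you invoke the block decomposition for $C_{\mfg_s}(A)$; the matrix-algebra analogue is immediate by the usual Sylvester-type argument (if $A_jX=XA_{j'}$ with $j\neq j'$, then $(\hat\alpha_j-\hat\alpha_{j'})^kX$ is a sum of terms $N_j^aXN_{j'}^b$ with $a+b=k$, which vanishes for large $k$, and $\hat\alpha_j-\hat\alpha_{j'}$ is a unit), but you should state it rather than cite the group version.
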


\begin{proof}
First, we will assume that $A$ is a split primary matrix in Jordan canonical form. Since $M$ is stable, we can choose $A = \mu_s(a)I + \oplus_{i=1}^l N_{n_i}$ for some $a \in k$. The fact that $C_{G_s}(A) \subseteq C_{G_s}(M)$ follows from $B$ being an element of $Z(C_{\mathfrak{g}_{s}}(A))$. Let $g$ be an element of $C_{G_s}(M)$, we will show that $g \in C_{G_s}(A)$. Since $\bar{g}$ is an element of $C_{G_1}(\bar{A})$, we have that $g=g_1+\pi g_2$ with $g_1=\mu_s(\bar{g})$. Note $g_1$ is a block Toeplitz matrix that commutes with $A$ by \cref{lemma:singla}. Thus, we are left to show that $\pi g_2$ is an element of $C_{\mathfrak{g}_{s}}(A)$ which follows if $g_2$ is an element of $C_{\mathfrak{g}_{s-1}}(A)$. Note that $g_1$ commutes with $M$, as $C_{G_s}(A) \subseteq C_{G_s}(M)$. Also, since $g$ commute with $M$, we have that $g_2$ commute with $M$. Note that $M \mod{\mathfrak{p}^{s-1}}$ is still a stable matrix of $\mathfrak{g}_{s-1}$, as $B \mod{\mathfrak{p}^{s-1}}$ is still an element of $Z(C_{\mathfrak{g}_{s-1}}(A))$ by \cref{theorem:center}. By induction we can assume that $C_{G_{s-1}}(M)=C_{G_{s-1}}(A)$. Therefore, we have $g_2$ is an element of $C_{\mathfrak{g}_{s-1}}(A)$. Thus, we can conclude that $C_{G_s}(M)=C_{G_s}(A)$ when $A$ is a split primary matrix.

For the general case, let $A = \oplus_{i=1}^l A_i$ with $A_i$ a split primary matrix with distinct eigenvalues $\mu_s(a_i)$ with $a_i \in k$. We have that  $C_{G_s}(A) = \oplus_{i=1}^l
C_{GL_{n_i} (\mfO_s)}(A_i)$  by \cref{lemma:blockcentral}.
Similarly, $C_{G_s}(M) = \oplus_{i=1}^l
C_{GL_{n_i} (\mfO_s)}(M_i)$ with $M_i=A_i + \pi B_i$  a stable matrix. By the split primary case, we have that $C_{GL_{n_i} (\mfO_s)}(M_i)= C_{GL_{n_i} (\mfO_s)}(A_i)$, thus $C_{G_s}(M)=C_{G_s}(A)$.

\end{proof}

We take advantage of the previous two lemmas to prove the following: 

\begin{lemma}
\label{lemma:surjective}
Let $M=A+ \pi B$ a stable matrix of $\mathfrak{g}_s$ with $\bar{A}$ a split matrix in Jordan canonical form. Then for any $r \geq s$, the map $\rho_s: C_{G_r}(\mu_{s,r}(M)) \to C_{G_s}(M)$ is surjective.
\end{lemma}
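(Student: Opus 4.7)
The plan is to construct an explicit lift of any element of $C_{G_s}(M)$ by applying the section $\mu_{s,r}$ entry by entry. By \cref{lemma:centralsplit}, $C_{G_s}(M) = C_{G_s}(A)$, so it suffices to lift an arbitrary $g \in C_{G_s}(A)$. Write $A = \oplus_{i=1}^{l} A_i$ where each $A_i = \mu_s(a_i) I_{n_i} + N_i$ is a split primary Jordan matrix with the $a_i \in k$ distinct, and $N_i = \oplus_j N_{n_{i,j}}$ is a direct sum of principal nilpotent blocks. By \cref{lemma:blockcentral}, $g = \oplus_i g_i$ with $g_i \in C_{GL_{n_i}(\mfO_s)}(N_i)$, and by \cref{lemma:singla} each $g_i$ is a block upper-Toeplitz matrix.

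I would then define $\tilde{g} = \mu_{s,r}(g)$ entry-wise, and similarly $\tilde{A} = \mu_{s,r}(A)$, $\tilde{M} = \mu_{s,r}(M)$, with componentwise $\tilde{A}_i = \mu_r(a_i) I_{n_i} + N_i$, $\tilde{M}_i = \mu_{s,r}(M_i)$, $\tilde{g}_i = \mu_{s,r}(g_i)$. Since $\mu_{s,r}$ sends $0$ to $0$, preserves equalities between entries, and is a section of $\rho_s$, it preserves every positional feature appearing in the structural descriptions of \cref{lemma:singla}, \cref{lemma:blockcentral} and \cref{theorem:center}: block-diagonal block sizes, positions of zero entries, equalities along Toeplitz diagonals, and the truncation compatibility between diagonal blocks of different sizes. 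In particular $\tilde{g}_i$ remains block upper-Toeplitz over $\mfO_r$, so by \cref{lemma:singla} applied at level $r$ it centralizes $N_i$ and hence $\tilde{A}_i$.

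Next I would verify that $\tilde{M}_i \in Z(C_{\M_{n_i}(\mfO_r)}(N_i))$, which would force $\tilde{g}_i$ and $\tilde{M}_i$ to commute. Since $A$ lies trivially in $Z(C_{\mathfrak{g}_s}(A))$ and $B \in Z(C_{\mathfrak{g}_s}(A))$ by stability, their sum $M = A + \pi B$ also lies in $Z(C_{\mathfrak{g}_s}(A))$. By \cref{lemma:blockcentral} this decomposition reads $M = \oplus_i M_i$ with $M_i \in Z(C_{\M_{n_i}(\mfO_s)}(N_i))$, and \cref{theorem:center} gives an explicit block-diagonal upper-Toeplitz-with-compatibility description of each $M_i$ expressed entirely through positional equalities. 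Applying $\mu_{s,r}$ entry-wise preserves that entire description, so by \cref{theorem:center} applied over $\mfO_r$ we get $\tilde{M}_i \in Z(C_{\M_{n_i}(\mfO_r)}(N_i))$. Hence $\tilde{g}_i \tilde{M}_i = \tilde{M}_i \tilde{g}_i$ in each block; summing over $i$ gives $\tilde{g}\tilde{M} = \tilde{M}\tilde{g}$. Invertibility of $\tilde{g}$ follows from $\det \tilde{g} \equiv \det g \not\equiv 0 \pmod{\mathfrak{p}}$, and $\rho_s(\tilde{g}) = g$ because $\mu_{s,r}$ is a section of $\rho_s$, completing the surjectivity.

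The main obstacle is that $\mu_{s,r}$ is neither additive nor multiplicative, so matrix products cannot be lifted directly and the equation $gM = Mg$ does not literally transfer to $\mfO_r$. The saving feature is that the centralizers and their centers involved are characterized combinatorially by zero entries and entry equalities — conditions that any entry-wise section automatically respects — so the needed commutation $\tilde{g}\tilde{M} = \tilde{M}\tilde{g}$ is obtained indirectly from the abstract memberships $\tilde{g} \in C_{\mathfrak{g}_r}(\tilde{A})$ and $\tilde{M} \in Z(C_{\mathfrak{g}_r}(\tilde{A}))$ rather than from lifting multiplicative relations.
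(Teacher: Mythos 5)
Your proof is correct and follows essentially the same route as the paper: reduce to split primary blocks via \cref{lemma:centralsplit} and \cref{lemma:blockcentral}, lift the block upper-Toeplitz centralizer element entry-wise by $\mu_{s,r}$, and apply \cref{lemma:singla} over $\mathcal{O}_r$, noting that $\mu_{s,r}$ preserves the relevant zero-patterns and entry equalities. The only divergence is that the paper disposes of the $\pi B$ part by the reduction ``without loss of generality $M=A$'' (via \cref{lemma:centralsplit}), whereas you check directly that the lift also commutes with $\mu_{s,r}(M)$; this is fine (indeed it makes explicit what the paper's reduction leaves implicit), but note it uses the easy converse of \cref{theorem:center} -- that every matrix of the stated block-diagonal compatible Toeplitz form, being a polynomial in $\oplus_i N_{n_i}$, really does lie in $Z(C_{\M_n(R)}(\oplus_i N_{n_i}))$ -- which the theorem as stated does not assert and which you should verify in a line.
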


\begin{proof}
Let $M=A+\pi B$ with $A$ a split matrix and $B \in Z(C_{\mathfrak{g}_s}(A))$. Note by \cref{lemma:centralsplit}, $C_{G_s}(M)=C_{G_s}(A)$. Thus, without loss of generality, we may assume $M=A$. Note  $A=\oplus_{i=1}^l A_{i}$ where each $A_i$ is a split primary matrix with distinct eigenvalues $\mu_s(a_i)$ with $a_i \in k$. By using $\mu_r(0)=0$ and $\mu_r(1)=1$, $\mu_{s,r}(A_i)$ is upper triangular with all diagonal entries equal to $\mu_r(a_i)$. Note $a_i-a_j$ is invertible in $k$ implies $\mu_r(a_i)-\mu_r(a_j)$ is invertible in $\mfO_r$. Using \cref{lemma:blockcentral}, we have $C_{G_r}(\mu_{s,r}(A)) = \oplus_{i=1}^l
C_{GL_{n_i} (\mfO_r)}(\mu_{s,r}(A_i))$ and similarly $C_{G_s}(A) = \oplus_{i=1}^l
C_{GL_{n_i} (\mfO_s)}(A_i)$. Thus to show surjectivity, it is enough to show surjectivity for each split primary matrix $A_i$. It follows by generalizing Singla's argument for $r=2$ in \cite[Lemma 5.1]{SinglaPooja2010Orog}.

\textbf{Split Primary Case:} Now we may assume that $A$ is a split primary matrix in Jordan canonical form. Since $M$ is stable, we can choose $A = \mu_s(a)I + \oplus_{i=1}^t N_{n_i}$ for some $a \in k$. Let $g \in C_{G_s}(A)$, by \cref{lemma:singla}, $g$ is an invertible block Toeplitz matrix of order $(n_1, n_2, \cdots , n_t)$ over the
ring $\mfO_s$. Our choice of section $\mu_{s,r}$ ensures that, $\mu_{s,r}(A) = \mu_r(a)I + \oplus_{i=1}^t N_{n_i}$, and $\mu_{s,r}(g)$ is an invertible block Toeplitz matrix of order $(n_1, n_2, \cdots , n_t)$ over the ring $\mfO_r$. Therefore, by \cref{lemma:singla} $\mu_{s,r}(g) \in C_{G_r}(\mu_{s,r}(A))$. Hence the map $\rho_s: C_{G_r}(\mu_{s,r}(A)) \to C_{G_s}(A)$ is surjective.
\end{proof}

\begin{proposition}
\label{proposition:stabilizer}
For $M=A+\pi B$ a stable matrix of $\mathfrak{g}_{l'}$ with $\bar{A}$ a split matrix in Jordan canonical form, $G_r(\psi_M)=K^{l'}C_{G_r}(\mu_{l',r}(M))$.
\end{proposition}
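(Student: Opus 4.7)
The plan is to reduce the claim, via \cref{lemma:Hillstabilizer}, to a statement about centralizers, and then apply the surjectivity already established in \cref{lemma:surjective}. By \cref{lemma:Hillstabilizer} we know $G_r(\psi_M) = \rho_{l'}^{-1}(C_{G_{l'}}(M))$, so the task is to prove the set equality
\[
\rho_{l'}^{-1}(C_{G_{l'}}(M)) \;=\; K^{l'}\,C_{G_r}(\mu_{l',r}(M)).
\]

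For the inclusion $\supseteq$, I would argue directly. Since $K^{l'} = \Ker \rho_{l'}$, any element of $K^{l'}$ lies trivially in $\rho_{l'}^{-1}(C_{G_{l'}}(M))$. For $h \in C_{G_r}(\mu_{l',r}(M))$, the defining property of the section $\mu_{l',r}$ gives $\rho_{l'}(\mu_{l',r}(M)) = M$, and since $\rho_{l'}$ is a ring homomorphism, $\rho_{l'}(h)$ commutes with $M$, placing $h$ in $\rho_{l'}^{-1}(C_{G_{l'}}(M))$. Because the latter is a subgroup (it is the preimage of a subgroup under a homomorphism), the product $K^{l'}\,C_{G_r}(\mu_{l',r}(M))$ is contained in it.

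The inclusion $\subseteq$ is where the work has been done. Take $g \in \rho_{l'}^{-1}(C_{G_{l'}}(M))$, so $\rho_{l'}(g) \in C_{G_{l'}}(M)$. Because $M = A + \pi B$ is a stable matrix of $\mathfrak{g}_{l'}$ with $\bar{A}$ split and in Jordan canonical form, \cref{lemma:surjective} (applied with $s = l'$ and $r = r$) tells us that the reduction
\[
\rho_{l'}\colon C_{G_r}(\mu_{l',r}(M)) \longrightarrow C_{G_{l'}}(M)
\]
is surjective. Hence we can find $h \in C_{G_r}(\mu_{l',r}(M))$ with $\rho_{l'}(h) = \rho_{l'}(g)$. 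Then $g h^{-1} \in \Ker \rho_{l'} = K^{l'}$, giving the factorization $g = (gh^{-1})\,h \in K^{l'}\,C_{G_r}(\mu_{l',r}(M))$, as required.

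I do not expect any real obstacle here: the heavy lifting is the surjectivity in \cref{lemma:surjective}, which in turn rests on the block-Toeplitz description of \cref{lemma:singla}, the reduction to split primary blocks via \cref{lemma:blockcentral}, and the multiplicativity of the Teichmüller-type section $\mu_{s,r}$. Once that surjectivity is available, the present proposition is just a standard preimage-equals-kernel-times-section argument. The only point that deserves a brief remark is that the hypothesis on $M$ (stable, with $\bar A$ split and in Jordan canonical form) is exactly what is needed to invoke \cref{lemma:surjective}; no additional reduction or conjugation step is required.
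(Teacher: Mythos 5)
Your proposal is correct and follows the same route as the paper: the paper likewise combines \cref{lemma:Hillstabilizer} with the surjectivity of $\rho_{l'}\colon C_{G_r}(\mu_{l',r}(M)) \to C_{G_{l'}}(M)$ from \cref{lemma:surjective}, merely stating the preimage-equals-kernel-times-centralizer equality without spelling out the two inclusions as you do.
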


\begin{proof}
Note that $G_r(\psi_M)=\rho_{l'}^{-1}(C_{G_{l'}}(M))=K^{l'}C_{G_r}(\mu_{l',r}(M))$ where the second equality follows from the fact that the map $\rho_{l'}: C_{G_r}(\mu_{l',r}(M)) \to C_{G_{l'}}(M)$ is surjective, by \cref{lemma:surjective}. 
\end{proof}

\section{Even case}
\label{Section:5}

In this section, we will construct the stable irreducible representations of $G_r=\GL_{N}(\mathcal{O}_{r})$ when $r=2l$. In other words, we will construct the representations of $\Irr(G_r\mid \psi_M)$ where $M$ is a stable matrix with entries in $\mathcal{O}_{l}$. Our goal will be to prove that $\psi_M$ extends to a character $\tilde{\psi}_{M}$ of the stabilizer $G_r(\psi_M)$ by taking advantage of the following lemma. 



\begin{lemma}
\label{lemma:extension}
Let $G=N\cdot H$, a finite group where $N$ is a normal subgroup of $G$. Given $\chi$ a $G$-invariant irreducible one dimensional character of $N$ then the following are equivalent:
\begin{enumerate}
    \item $\chi$ extends to a character of $G$.
    \item $\chi$ restricted to a character of $[G,G] \cap N$ is the trivial character.
    \item $\chi$ restricted to a character of $[H,H] \cap N$ is the trivial character.
\end{enumerate}
\end{lemma}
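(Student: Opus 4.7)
The plan is to prove the two equivalences (1)~$\Leftrightarrow$~(2) and (2)~$\Leftrightarrow$~(3). The first equivalence is a general fact not requiring $G$-invariance or the factorization $G = NH$: any extension of a one-dimensional character $\chi$ is itself a homomorphism $G \to \C^\times$, hence trivial on $[G,G]$ and a fortiori on $[G,G] \cap N$, giving (1)~$\Rightarrow$~(2). Conversely, if $\chi$ vanishes on $N \cap [G,G]$ it factors through $N/(N \cap [G,G]) \cong N[G,G]/[G,G]$, a subgroup of the finite abelian group $G/[G,G]$; any character of a subgroup of a finite abelian group extends (for instance by extending one generator at a time), and pulling back along $G \to G/[G,G]$ yields an extension of $\chi$ to $G$.

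The implication (2)~$\Rightarrow$~(3) is immediate from $[H,H] \leq [G,G]$, so $[H,H] \cap N \leq [G,G] \cap N$.

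The main content is (3)~$\Rightarrow$~(2), and here the plan has two steps. First, I will observe that the $G$-invariance of the one-dimensional character $\chi$ forces it to be trivial on the subgroup $[G,N]$: for $g \in G$ and $n \in N$,
\[
\chi([g,n]) \;=\; \chi({}^g n)\,\chi(n)^{-1} \;=\; \chi(n)\,\chi(n)^{-1} \;=\; 1,
\]
using ${}^g\chi = \chi$. Second, I will establish the commutator identity $[G,G] = [H,H]\cdot[G,N]$. Since $[G,N]$ is normal in $G$, in the quotient $\bar G := G/[G,N]$ the image of $N$ is central; using $G = NH$, the commutator of two arbitrary elements $n_1 h_1, n_2 h_2 \in G$ reduces modulo $[G,N]$ to $[\bar h_1, \bar h_2]$ because the central factors commute through. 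This gives $[G,G] \subseteq [H,H]\cdot[G,N]$, with the reverse inclusion trivial.

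Once these two steps are in hand, the proof finishes quickly: given $c \in [G,G]\cap N$, write $c = xy$ with $x \in [H,H]$ and $y \in [G,N] \subseteq N$. Since $c \in N$, also $x = c y^{-1} \in N$, so $x \in [H,H]\cap N$. Then $\chi(c) = \chi(x)\chi(y) = 1 \cdot 1 = 1$ by hypothesis~(3) and the triviality on $[G,N]$ established in Step~1. The main obstacle is the commutator identity of Step~2, though it reduces to a direct centrality computation in $\bar G$; the genuine conceptual input is Step~1, which is what makes the apparently weaker hypothesis (3) sufficient to imply (2).
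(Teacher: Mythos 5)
Your proof is correct, and for the only substantive implication, (3) $\Rightarrow$ (2), it takes a genuinely different and cleaner route than the paper. The paper argues at the level of elements: it writes $g \in [G,G]\cap N$ as a product of commutators $\prod[x_i,y_i]$, decomposes each $x=n_x\bar{x}$ with $n_x\in N$, $\bar{x}\in H$, rewrites $[x,y]=A[\bar{x},\bar{y}]B$ with $A,B\in N$ satisfying $\chi(A)\chi(B)=1$ by $G$-invariance, and then runs an induction to peel off these $N$-factors, concluding $\chi(g)=\chi(\prod[\bar{x}_i,\bar{y}_i])=1$. You instead isolate the subgroup $[G,N]$, observe once and for all that $G$-invariance forces $\chi$ to vanish on it (your Step 1, which is the same use of invariance as the paper's cancellation $\chi(A)\chi(B)=1$, but packaged structurally), and then prove the identity $[G,G]=[H,H]\cdot[G,N]$ by passing to $G/[G,N]$, where the image of $N$ is central and $G=NH$ makes every commutator congruent to one from $H$. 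The final decomposition $c=xy$ with $x\in[H,H]\cap N$ and $y\in[G,N]$ (valid because $[G,N]\trianglelefteq G$ and $[G,N]\subseteq N$) then gives $\chi(c)=1$ immediately. What your approach buys is the elimination of the paper's somewhat delicate inductive bookkeeping, a reusable commutator identity, and a transparent localization of where $G$-invariance is used; what the paper's approach buys is that it stays entirely at the level of explicit elements without introducing $[G,N]$ or quotient groups. Your treatment of (1) $\Leftrightarrow$ (2) (triviality on commutators one way, extension through the abelianization the other) and of (2) $\Rightarrow$ (3) (the inclusion $[H,H]\cap N\subseteq[G,G]\cap N$) coincides with the paper's.
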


\begin{proof}
(1) implies (2): Let $\hat{\chi}$ be the extension of $\chi$ to a character of $G$. $\hat{\chi}$ is also a $1$-dimensional character, and thus $\hat{\chi}$ is trivial on commutators. Therefore, we have that $\chi$ restricted to a character of $[G,G] \cap N$ is the trivial character.

(2) implies (1): Assume that $\chi$ is trivial on $[G,G] \cap N$. Thus we can think of $\chi$ as a character of the image of $N$ in the abelianization of $G$. Let $\hat{N}$ be this image. Since characters of abelian groups always extend, there is an extension of $\chi$, denoted as $\hat{\chi}$, to a character of $G/[G,G]$. Pre-composing $\hat{\chi}$ with the map $G\rightarrow G/[G,G]$, we obtain a one dimensional representation of $G$ that extends $\chi$.

Note (2) implies (3) follows as $[H,H] \cap N \subseteq [G,G] \cap N$. Thus, we are left to show that (3) implies (2). Let $g \in [G,G] \cap N$, we can write $g=\prod [x_i,y_i]$ with $x_i$ and $y_i$ in $G$. We can write any element $g$ of $G$ as $g=n_{g}\Bar{g}$ with $n_{g} \in N$ and $\Bar{g} \in H$. For any $x,y$ in $G$, we have $[x,y]=[n_{x}\Bar{x},n_{y}\Bar{y}]=A[\Bar{x},\Bar{y}]B$ where $A=n_{x}\Bar{x}n_{y}\Bar{x}^{-1}$ and $B=\Bar{y}n_{x}^{-1}\Bar{y}^{-1}n_{y}^{-1}$. Since $A$ and $B$ are in $N$ and $\chi$ is $G$-invariant, we have that $\chi(A)\chi(B)=1$.

 By the previous argument, we can write $g=\prod [x_i,y_i]=\prod A_i[\Bar{x_i},\Bar{y_i}]B_i$. We will let $g_i=[\Bar{x_i},\Bar{y_i}]$. Note $\chi(g)=\chi(A_1)\chi(g_1B_1g_1^{-1})\chi(g_1\prod  A_i[\Bar{x_i},\Bar{y_i}]B_i)=\chi(g_1\prod  A_i[\Bar{x_i},\Bar{y_i}]B_i)$, since $\chi(A_1)\chi(g_1B_1g_1^{-1})=1$ by $\chi$ being $G$-invariant. By induction, it is enough to assume we have $\chi({g'}A_ng_nB_n)=\chi(^{g'}A_n) \cdot \chi(^{{g'}g_n}B_n) \cdot \chi({g'}g_n)=\chi({g'}g_n)$. Note also by this process that the $\prod g_i$ is an element of $[H,H] \cap N$ and thus $\chi(g)=\chi(\prod g_i)=1$, concluding the proof. 

\end{proof}

\begin{remark}
We note that \cref{lemma:extension} can be obtained by a different argument. The equivalence of (1) and (2) is known from \cite[Proposition 7]{nagornyi1976complex}. The equivalence of part (3) follows from \cite[Lemma 5.4]{SinglaPooja2010Orog}. 
\end{remark}

Any element of the commutator of $G_r$ has determinant one. Thus, we will be interested in computing the determinant of elements of $K^{l'}$, where $l'=r-l$.

\begin{lemma}
\label{lemma:determinant}
Let $I+\pi^{l'}X$ be an element of $K^{l'}$ and $g \in G_r$ be such that $^gX=T+\pi Y$ with $T= (T_{i,j}) \in M_n(\mathcal{O}_{r})$ an upper triangular matrix. Then we have: 

\begin{equation*}
\det(I+\pi^{l'}X)=
    \begin{cases}
      1+\pi^{l'}\Tr(X) \text{ if $r$ is even}\\
      1+\pi^{l'}\Tr(X)+\pi^{2l'}\sum_{j=1}^n\sum_{i\neq j} T_{i,i}T_{j,j}/2 \text{ if $r$ and $p$ are odd}
    \end{cases}\,.
\end{equation*}

\end{lemma}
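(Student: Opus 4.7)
The plan is to exploit conjugation-invariance of $\det$ to replace $X$ by $^gX = T + \pi Y$, and then read off the determinant of $M := I + \pi^{l'}T + \pi^{l'+1}Y$ by tracking $\pi$-adic valuations entry by entry. The diagonal entries of $M$ are $1 + \pi^{l'}T_{i,i} + \pi^{l'+1}Y_{i,i}$; the entries strictly above the diagonal lie in $\pi^{l'}\mathcal{O}_r$; and, crucially because $T$ is upper triangular, the entries strictly below the diagonal lie in $\pi^{l'+1}\mathcal{O}_r$. This asymmetry between super- and subdiagonal valuations drives the whole calculation.

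Next I would expand $\det(M) = \sum_\sigma \operatorname{sgn}(\sigma)\prod_i M_{i,\sigma(i)}$ via Leibniz and show that only $\sigma = \mathrm{id}$ survives modulo $\pi^r$. For any $\sigma \neq \mathrm{id}$ with support of size $k \ge 2$, the largest element of the support is automatically a descent ($i > \sigma(i)$), so at least one factor in the product contributes $\pi^{l'+1}$ while the remaining $k-1$ non-fixed factors each contribute at least $\pi^{l'}$; the total is divisible by $\pi^{l'k+1}$. A short case check --- $r = 2l$ gives $l' = l$, and $r = 2l-1$ gives $l' = l-1$ --- shows $l'k + 1 \ge r$ for every $k \ge 2$, so these contributions vanish in $\mathcal{O}_r$.

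It then remains to expand $\prod_i(1+z_i)$ with $z_i := \pi^{l'}T_{i,i} + \pi^{l'+1}Y_{i,i}$. The linear sum gives $\sum_i z_i = \pi^{l'}(\Tr T + \pi\Tr Y) = \pi^{l'}\Tr({}^gX) = \pi^{l'}\Tr X$ by conjugation-invariance of the trace. Triple and higher products have $\pi$-valuation $\ge 3l' \ge r$ (a short parity check) and vanish. The quadratic part contributes leading term $\pi^{2l'}\sum_{i<j}T_{i,i}T_{j,j}$, with cross-corrections of valuation $\ge 2l'+1 \ge r$ that vanish. When $r$ is even, $\pi^{2l'} = \pi^r \equiv 0$ in $\mathcal{O}_r$, leaving only $1 + \pi^{l'}\Tr X$. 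When $r$ is odd, $\pi^{2l'} = \pi^{r-1}$ and the quadratic term survives; rewriting $\sum_{i<j} T_{i,i}T_{j,j} = \tfrac{1}{2}\sum_{i\neq j} T_{i,i}T_{j,j}$ --- legitimate because $p$ is odd so $2 \in \mathcal{O}_r^{\times}$ --- yields the stated formula.

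The only real obstacle is careful $\pi$-adic bookkeeping; there is no substantive algebra beyond Leibniz. The key conceptual point is that the ``upper triangular'' hypothesis on $T$ is exactly what upgrades subdiagonal entries from $\pi^{l'}$ to $\pi^{l'+1}$; this single extra factor of $\pi$ is what allows transpositions to vanish in the borderline case $r$ odd (where $2l'+1 = r$ is a tight equality), and it is what lets the cross-corrections in the quadratic expansion die while the pure-$T$ leading term survives as the nontrivial correction in the odd case.
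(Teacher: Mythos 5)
Your proposal is correct and takes essentially the same route as the paper: both arguments reduce $\det(I+\pi^{l'}({}^gX))$ to the product of its diagonal entries modulo $\pi^{r}$ by exploiting the extra factor of $\pi$ in the subdiagonal entries coming from the upper-triangularity of $T$ (the paper via cofactor expansion and induction, you via the Leibniz sum and a descent count), and the valuation bounds $kl'+1\geq r$, $2l'+1\geq r$, $3l'\geq r$ you check are exactly what the paper uses implicitly. The subsequent expansion of the diagonal product, with $\Tr(T+\pi Y)=\Tr({}^gX)=\Tr(X)$ and the rewriting $\sum_{i<j}T_{i,i}T_{j,j}=\tfrac{1}{2}\sum_{i\neq j}T_{i,i}T_{j,j}$ using $p$ odd, matches the paper's computation.
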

 
 \begin{proof}
 Note $|I+\pi^{l'}X|=|I+\pi^{l'}(T+\pi Y)|$ by conjugation with $g$. We denote:
 
\begin{equation}
|I+\pi^{l'}(T+\pi Y)| = 
\begin{vmatrix}
a_{1,1} & a_{1,2} & \cdots & a_{1,n} \\
a_{2,1} & a_{2,2} & \cdots & a_{2,n} \\
\vdots  & \vdots  & \ddots & \vdots  \\
a_{n,1} & a_{n,2} & \cdots & a_{n,n} 
\end{vmatrix}
\end{equation}
 
Doing a cofactor expansion of the first column we obtain the following:
$$|I+\pi^{l'}(T+\pi Y)| = \sum_{j=1}^{n} (-1)^{1+j} a_{j,1} M_{j,1},$$ where $M_{j,1}$ is a minor of the given matrix. We claim $a_{j,1} M_{j,1}=0$ for $j\neq 1$. When $j\neq 1$, $a_{j,1}=\pi^l Y_{j,1}$ since $T$ is an upper triangular matrix. Thus, it is enough to show that the minor $M_{j,1}$ is a multiple of $\pi^{l'}$ for $j\neq 1$. It follows by a cofactor expansion along the $j-1$ column of $M_{j,1}$ and noticing that all the entries of this column are a multiple of $\pi^{l'}$. Thus $|I+\pi^{l'}(T+\pi Y)| = a_{1,1} M_{1,1}$. Notice that the $M_{1,1}$ minor is just a smaller version of the matrix we started with. Thus by induction we can conclude that: $$|I+\pi^{l'}(T+\pi Y)| = \prod_{j=1}^n a_{i,i}=\prod_{j=1}^n (1+\pi^{l'}(T_{i,i}+\pi Y_{i,i}))=1+\pi^{l'}\Tr(X)+\pi^{2l'}\sum_{j=1}^n\sum_{i\neq j} T_{i,i}T_{j,j}/2.$$

Note that when $r$ is even $2l'=r$, so the formula for the even case follows.
\end{proof}

Recall that when $M=A + \pi B$ is a stable matrix with  $\bar{A}$ a split matrix in Jordan canonical form, we have  $G_r(\psi_M)=K^{l'}C_{G_r}(\mu_{l',r}(M))$ by \cref{proposition:stabilizer}. Moreover, $H:=C_{G_r}(\mu_{l',r}(M)) =C_{G_r}(\mu_{l',r}(A)) = \oplus_{i=1}^l
C_{GL_{n_i} (\mfO_r)}(\mu_{l',r}(A_i))$ is a direct sum of block matrices, by \cref{lemma:blockcentral}. We will denote $H_i=C_{GL_{n_i} (\mfO_r)}(\mu_{l',r}(A_i))$.

\begin{lemma}
\label{lemma:commutator}
Let $H$ be a subgroup of $G_r$ such that $H=\oplus_{i=1}^l H_{i}$, a direct sum of subgroups $H_{i}$ of $\mathrm{GL}_{n_{i}}(\mathcal{O}_{r})$. Given $I+\pi^lX= \oplus_{i=1}^l I+\pi^lX_i \in [H,H]$ with $I+\pi^lX_i \in H_i$ then $I+\pi^lX_i$ is an element of $[H_i,H_i]$. 
\end{lemma}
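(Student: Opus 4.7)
The plan is to exploit the direct-sum structure to reduce a commutator identity in $H$ to one in each block $H_i$, and to observe that multiplication, conjugation, and hence commutation are carried out blockwise.

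First I would unwind the definition: any element of $[H,H]$ has the form $\prod_{j=1}^{m}[g_j,h_j]$ for some $g_j,h_j\in H$. Since $H=\oplus_{i=1}^{l} H_i$, each $g_j$ and $h_j$ is itself block diagonal, say $g_j=\oplus_{i=1}^{l} g_{j,i}$ and $h_j=\oplus_{i=1}^{l} h_{j,i}$ with $g_{j,i},h_{j,i}\in H_i$. Matrix multiplication of block-diagonal matrices is carried out block by block, so inversion is blockwise as well, and therefore $[g_j,h_j]=\oplus_{i=1}^{l}[g_{j,i},h_{j,i}]$, which is a direct sum of elements of $[H_i,H_i]\subseteq H_i$.

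Next, taking the product over $j$ again respects the direct sum:
\[
\prod_{j=1}^{m}[g_j,h_j]\;=\;\prod_{j=1}^{m}\bigl(\oplus_{i=1}^{l}[g_{j,i},h_{j,i}]\bigr)\;=\;\oplus_{i=1}^{l}\Bigl(\prod_{j=1}^{m}[g_{j,i},h_{j,i}]\Bigr).
\]
If the left-hand side equals $I+\pi^{l}X=\oplus_{i=1}^{l}(I+\pi^{l}X_i)$, then comparing the $i$-th block gives $I+\pi^{l}X_i=\prod_{j=1}^{m}[g_{j,i},h_{j,i}]\in[H_i,H_i]$, which is exactly the desired conclusion.

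There is no real obstacle here; the argument is just bookkeeping on the fact that the group operation of a (internal) direct product is componentwise and commutators therefore decompose along the direct sum. The only point to be careful about is the factorization $g_j=\oplus g_{j,i}$, which is precisely the hypothesis $H=\oplus H_i$, guaranteeing that the blocks of any element of $H$ lie in the individual $H_i$.
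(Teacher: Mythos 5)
Your proposal is correct and follows essentially the same argument as the paper: write the element of $[H,H]$ as a product of commutators, note that commutators and products of block-diagonal matrices decompose blockwise, and compare the $i$-th blocks. Nothing further is needed.
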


\begin{proof}
Let  $I+\pi^lX= \oplus_{i=1}^l I+\pi^lX_i = \prod_{j=1}^{r} [g_j,h_j]$ as $I+\pi^lX \in [H,H]$. Note that $[g_j,h_j]=[\oplus_{i=1}^lg_{i,j},\oplus_{i=1}^l h_{i,j}]=\oplus_{i=1}^l [g_{i,j},h_{i,j}]$  where $g_j=\oplus_{i=1}^l g_{i,j}$ and $h_j=\oplus_{i=1}^l h_{i,j}$. By fixing $i$ we have that  $I+\pi^lX_i= \prod_{j=1}^{r} [g_{i,j},h_{i,j}]$ and thus $I+\pi^lX_i \in [H_i,H_i]$.
\end{proof}

 We use the previous lemma to prove \cref{theorem:Ieven}, which we restate here for the reader's benefit. 

\begin{theorem A}
Let $r=2l$ and $\chi$ be a stable irreducible character of $G_r=\GL_{N}(\mathcal{O}_{r})$ above $\psi_M$, where $\psi_M$ is an irreducible character of $K^l$. Then:  

\begin{enumerate}
    \item $\psi_M$ extends to a character $\tilde{\psi}_{M}$ of the stabilizer $G_r(\psi_M)$.
    \item there is an irreducible character $\rho$ of $G_r(\psi_M)/K^l$ such that $\chi=\operatorname{Ind}_{G_r(\psi_M)}^{G_r}(\tilde{\psi}_{M}\rho)$.
\end{enumerate} 

\end{theorem A}

\begin{proof}

Since ${\psi}_{M}$ is a one dimensional character, to show that $\psi_M$ extends to a character $\tilde{\psi}_{M}$ of $G_r(\psi_M)=K^{l}C_{G_r}(\mu_r(A))$ it is enough to show that $\psi_M$ restricted to $[H,H] \cap N$ is trivial where $H=C_{G_r}(\mu_r(A))$ and $N=K^l$ by \cref{lemma:extension}. Since $\psi_{M}(I+\pi^{l}X) =\psi(\pi^{-{l}}\tr(MX)$, it is enough to show that $\tr(MX)=0 \mod{\mathfrak{p}^l}$ for $I+\pi^{l}X \in [H,H] \cap N$. 

We will prove such an extension exists by looking at three cases:

\textbf{Case 1: $A$ is a split primary matrix.}

Because we can pick $M$ up to conjugation, we may assume $M=A+\pi B$ is a stable matrix, with  $A=\lambda I \oplus   N_{n_i}$ where $N_{n_i}$ are principal nilpotent matrices and $B$ is an element of $Z(C_{\mathfrak{g}_l}(A))$. Let $I+\pi^lX$ be an element of  $[H,H] \cap N$, thus $\det(I+\pi^{l}X)=1$. By \cref{lemma:determinant}, as $l'=l$ we have $$\det(I+\pi^{l}X)=1+\pi^{l}\Tr(X),$$ so conclude that $$\Tr(X)=0 \mod{\mathfrak{p}^l}.$$ Since $X \in C_{g_l}(A)$, $X$ is a block upper-Toeplitz matrix (\cref{lemma:singla}).  Thus $$\Tr(MX)=\Tr(AX)+\pi \Tr(BX)=\lambda \Tr(X)+\pi \Tr(BX)=\pi \Tr(BX) \mod{\mathfrak{p}^l}.$$ Note since $B$ is an element of $Z(C_{\mathfrak{g}_l}(A))$, it has a unique value in the diagonal denoted as $b$. Thus by computation we have $$\Tr(MX)=\pi \Tr(BX)=\pi b\Tr(X)=0 \mod{\mathfrak{p}^l}.$$ Thus, the value of $\psi_M$ on $[H,H] \cap N$ is trivial. By \cref{lemma:extension} we conclude that $\psi_M$ extends to a character $\tilde{\psi}_{M}$ of $G_r(\psi_M)$. 

\textbf{Case 2: $A$ is a split matrix.}

By applying \cref{lemma:blockcentral}, we can assume that $M=A+\pi B= \oplus_{i=1} A_i+\pi B_i $ with each $A_i$ a split primary matrix and $B_i$ an element of $Z(C_{\mathfrak{g}_l}(A_i))$. Let $I+\pi^l X$ be an element of  $[H,H] \cap N$. By \cref{lemma:commutator}, $I+\pi^lX= \oplus_{i=1} I+\pi^l X_i$  with $I+\pi^l X_i \in [H_i,H_i]$ where $H= \oplus_{i=1} H_i$ with $H_i=C_{GL_{n_i} (\mfO_r)}(\mu_{l',r}(A_i))$. Note $$\Tr(MX)=\Tr(AX)+\pi \Tr(BX)=\sum_{i=1} \Tr(A_iX_i)+\pi \Tr(B_iX_i).$$ Since $A_i$ is a split primary matrix, $\Tr(A_iX_i)+\pi \Tr(B_iX_i)=0 \mod{\mathfrak{p}^l}$ by the argument in the case $1$. Thus, the value of $\psi_M$ on $[H,H] \cap N$ is trivial. We conclude that $\psi_M$ extends to a character $\tilde{\psi}_{M}$ of $G_r(\psi_M)$. 

\textbf{Case 3: $A$ is a non-split matrix.}

Since $M$ is a stable matrix by passing to an unramified extension $\mathcal{\hat{O}}_l$ of $\mathcal{O}_l$ we have that $M$ is conjugate to a matrix $A+\pi B$ with entries in $\mathcal{\hat{O}}_l$ such that $A$ is in Jordan canonical form with $B$ in $Z(C_{\hat{\mathfrak{g}_{l}}}(A))$. Let $\hat{\mathrm{F}}$ be the corresponding unramified extension of $\mathrm{F}$ and $\hat{\psi}:\hat{\mathrm{F}} \to \mathbb{C}^*$ be the corresponding character that extends $\psi:\mathrm{F} \to \mathbb{C}^*$ then as defined before we have $\hat{\psi}_M$ a character of $\hat{K_l}=\operatorname{Ker}(\GL_{N}(\mathcal{\hat{O}}_r) \to \GL_{N}(\mathcal{\hat{O}}_l))$. This can all be pictured in the following diagram:

\[
\begin{matrix}
  \hat{\mathrm{F}} &  \supset & \hat{\mfO}& \triangleright & \hat{\mathfrak{p}}=\pi\hat{\mfO} \\
       \vert & & \vert & & \vert \\
   \mathrm{F} &  \supset & \mfO & \triangleright & \mathfrak{p}=\pi \mfO
\end{matrix}
\qquad\qquad
\begin{matrix}
  \mathcal{\hat{O}}_r  \\ \vert  \\ \mathcal{O}_r
\end{matrix}
\]

Since $M$ splits over $\mathcal{\hat{O}}_l$, there is a character $\Hat{\chi}$ of $\hat{G_r}(\hat{\psi}_M)$ that extends $\hat{\psi}_M$. Define $\chi$ to be the restriction of $\Hat{\chi}$ to $G_r({\psi}_M)$ then $\chi$ is an extension of ${\psi}_M$.

Since there is an extension of ${\psi}_M$ to the stabilizer, then part (2) follows by an application of Clifford theory, \cref{thm:Clifford}.
\end{proof}

\section{Odd case}
\label{Section:6}

Throughout this section, we will assume $p > 2$ and $r=2l-1$. Thus $l'=l-1$. Recall we have the following chain of normal subgroups of $G_r$:

\[
G_{r}\supset K^{l'}\supset\ K^{l}=I+\mfp^{l}\mfg_{r}.
\]

Since $l \geq r/2 $, by \cref{lemma:Hillstabilizer} we have an explicit description of $\Irr(K^l)$. Given an irreducible character $\psi_M$  of $K^l$, we will determine $\Irr(K^{l'} \mid \psi_M)$ using a technique called Heisenberg lift. We will then use Clifford theory to describe $\Irr(G_r \mid \sigma)$, for certain irreducible characters $\sigma$ of $\Irr(K^{l'} \mid \psi_M)$.

\subsection{Heisenberg Lift}

Recall that the representation  $\psi_{M}:K^{l}\rightarrow\mathbb{C}^{\times}$ is defined by $\psi_{M}(I+\pi^{l}x)=\psi(\pi^{-{l'}}\tr(M x))$. From \cref{lemma:Kernel}, we also have an isomorphism $K^{l'}/K^{l}\cong \mfg_{1}$ by sending $I+\pi^{l'}x$ to $\bar{x}$, where the bars denote reductions mod $\mfp$. We can identify any subgroup of $K^{l'}$ which contains $K^{l}$ with a subgroup of $\mfg_{1}$. Define the alternating bilinear form:

\begin{align*}
B_{M}:K^{l'}/K^{l}\times K^{l'}/K^{l} & \longrightarrow\F_{q}\\
B_{M}((1+\pi^{l'}x)K^{l},(1+\pi^{l'}y)K^{l}) & =\tr(\bar{M}(\bar{x}\bar{y}-\bar{y}\bar{x})).
\end{align*}

Given this form, one can show that $\psi_M$ can be extended to a subgroup $R_{M}$ such that $K^{l}\subseteq R_{M}\subseteq K^{l'}$ and $R_{M}/K^{l}$ is the radical subspace for the form $B_{M}$ \cite[Subsection 3.2]{krakovski2018regular}. In fact, when $p > 2$, each such extension has an explicit formula in terms of a matrix $\hat{M} \in \mfg_{l}$ such that $M=\hat{M} \mod{\pi^{l'}}$  \cite[Lemma 6.2.]{stasinski2017representations}:

\begin{align*}
\psi_{\hat{M}}:R_{M} & \longrightarrow\mathbb{C}^{\times}\\
\psi_{\hat{M}}(1+\pi^{l'}x) & =\psi(\pi^{-{l}}\tr(\hat{M}(x-\frac{1}{2}\pi^{l'}x^2)).
\end{align*}

Each such extension $\psi_{\hat{M}}$ determines
uniquely an irreducible character of $K^{l'}$ that arises as follows.

\begin{proposition}
\label{proposition:krakovski}
\cite[Subsection 3.2]{krakovski2018regular} Let $J_{M}$ be a subgroup of $K^{l'}$ that contains $K^l$ such that $J_{M}/K^{l}$ is a maximal isotropic subspace for the form $B_{M}$. We have:

\begin{enumerate}
    \item  The function $\psi_{\hat{M}}$ defines a multiplicative character of $R_M$. Moreover, the same formula defines a character of $J_M$ that extends $\psi_{M} \in \Irr(K^l)$. 
    
    \item The character $\sigma = \Ind_{J_{M}}^{K^{l'}}(\psi_{\hat{M}})$ is irreducible and independent of $J_{M}$.
    \item  There is a bijection between extensions of $\psi_M$  to $R_M$ and $\Irr(K^{l'}\mid \psi_M)$. For each extensions $\psi_{\hat{M}}$ there is a unique irreducible character of $K^{l'}$ above $\psi_{\hat{M}}$, namely $\sigma = \Ind_{J_{M}}^{K^{l'}}(\psi_{\hat{M}})$.
\end{enumerate}
\end{proposition}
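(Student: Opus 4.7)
The proposition is the finite Stone--von Neumann theorem for the symplectic quotient $(K^{l'}/K^{l},B_{M})$ with central character $\psi_M$; the novelty is the explicit formula for $\psi_{\hat M}$. My plan is therefore to verify that the formula defines a homomorphism on the required subgroups, then to run the standard Heisenberg--Mackey argument. Throughout, the $\pi$-adic arithmetic is controlled by $2l'=r-1$ and (using $l\geq 2$) $3l'\geq r$, which allow me to truncate all products beyond quadratic order.

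For part~(1), I would expand
\[
(1+\pi^{l'}x)(1+\pi^{l'}y)=1+\pi^{l'}(x+y+\pi^{l'}xy)
\]
and substitute into the defining formula for $\psi_{\hat M}$. After cancelling the product $\psi_{\hat M}(1+\pi^{l'}x)\psi_{\hat M}(1+\pi^{l'}y)$ and using $\pi^{-l+l'}=\pi^{-1}$, the discrepancy collapses to $\psi\bigl(\tfrac12\pi^{-1}\tr(\bar M[\bar x,\bar y])\bigr)=\psi\bigl(\tfrac12\pi^{-1}B_M(\bar x,\bar y)\bigr)$. This vanishes whenever $\bar x$ or $\bar y$ lies in the radical (giving multiplicativity on $R_M$) or, more generally, whenever both lie in an isotropic subgroup (giving the character on $J_M$). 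The invertibility of $2$ in $\F_q$ is essential here, which is why $p>2$ is imposed. That $\psi_{\hat M}$ restricts to $\psi_M$ on $K^{l}$ follows by plugging $1+\pi^{l}w$ into the formula: the quadratic correction then contributes a factor $\psi(\pi\cdot(\ldots))=1$, while the linear term reproduces $\psi(\pi^{-l'}\tr(Mw))=\psi_M(1+\pi^{l}w)$.

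For part~(2), observe that $J_M\triangleleft K^{l'}$ since $K^{l'}/K^{l}$ is abelian, so I can apply Mackey's irreducibility criterion directly: $\Ind_{J_M}^{K^{l'}}\psi_{\hat M}$ is irreducible iff $\psi_{\hat M}\neq {}^{g}\psi_{\hat M}$ on $J_M$ for every $g\in K^{l'}\setminus J_M$. Writing $g=1+\pi^{l'}z$, an analogous truncated expansion of $gxg^{-1}$ yields
\[
{}^{g}\psi_{\hat M}(1+\pi^{l'}x)=\psi_{\hat M}(1+\pi^{l'}x)\cdot\psi\!\bigl(\pi^{-1}B_M(\bar z,\bar x)\bigr).
\]
Maximal isotropy of $J_M/K^{l}$ gives $(J_M/K^{l})^{\perp}=J_M/K^{l}$, so $\bar z\notin J_M/K^{l}$ forces some $\bar x\in J_M/K^{l}$ to satisfy $B_M(\bar z,\bar x)\neq 0$, settling irreducibility.

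Finally for part~(3), the same conjugation formula shows that $K^{l'}$ acts trivially on the set of extensions of $\psi_M$ to $R_M$ (since $B_M(\bar z,\bar x)=0$ whenever $\bar x\in R_M/K^{l}$). Applying \cref{thm:Clifford} to $R_M\triangleleft K^{l'}$ then forces every $\chi\in\Irr(K^{l'}\mid\psi_M)$ to satisfy $\chi|_{R_M}=\chi(1)\psi_{\hat M}$ for a unique extension $\psi_{\hat M}$. Frobenius reciprocity gives $\Ind_{R_M}^{K^{l'}}\psi_{\hat M}=\sum_{\chi}\chi(1)\chi$, the sum ranging over irreducibles above $\psi_{\hat M}$, and the identity $[K^{l'}:R_M]=[K^{l'}:J_M]^{2}$ (from $J_M/R_M$ being a Lagrangian in the non-degenerate symplectic space $(K^{l'}/K^{l})/(R_M/K^{l})$) forces a unique such $\chi$, of dimension $[K^{l'}:J_M]$. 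Since $\Ind_{J_M}^{K^{l'}}\psi_{\hat M}$ is already an irreducible containing $\psi_{\hat M}$ of precisely that dimension, it must coincide with $\chi$, which gives both the bijection in (3) and the independence from $J_M$ asserted in (2). The main obstacle I expect is keeping the $\pi$-adic bookkeeping in part~(1) straight so as to confirm that the $-\tfrac12\pi^{l'}x^{2}$ correction absorbs exactly the non-abelian defect; once that is nailed down, the rest is standard Heisenberg theory.
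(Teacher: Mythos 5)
Your argument is correct, and it is essentially the standard Heisenberg-lift proof: the paper itself does not prove this proposition but imports it from the cited sources (Krakovski--Onn--Singla \S3.2, Stasinski's Lemma 6.2, Bushnell--Henniart 8.3.3), and your truncated expansions (using $2l'=r-1$, $3l'\ge r$), Mackey irreducibility over the normal subgroup $J_M$, and the dimension count $[K^{l'}:R_M]=[K^{l'}:J_M]^2$ reconstruct exactly that argument. Two small points to write out explicitly: in the irreducibility step, ``$B_M(\bar z,\bar x)\neq 0$'' should be upgraded to ``the $\F_q$-linear functional $B_M(\bar z,\cdot)|_{J_M/K^l}$ is nonzero, hence surjective, so composing with the nontrivial character $a\mapsto\psi(\pi^{-1}a)$ of $\F_q$ gives ${}^g\psi_{\hat M}\neq\psi_{\hat M}$''; and in part (3) the uniqueness conclusion should be phrased as following from exhibiting $\sigma=\Ind_{J_M}^{K^{l'}}(\psi_{\hat M})$ as one constituent whose degree already saturates $\sum_\chi\chi(1)^2=[K^{l'}:R_M]$ (which you do supply in the next sentence), together with the observation that every irreducible of $R_M$ above the $K^{l'}$-invariant $\psi_M$ is one-dimensional because $\psi_M$ kills $[R_M,R_M]\subseteq K^l$.
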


The previous assertions are well known and coined as 'Heisenberg lift,' see for example \cite[Proposition 8.3.3]{bushnell2006gauss}. However, we emphasize that the important fact is not just that $\psi_{M}$ has an extension to $J_M$ (this is true for any $p$, by \cite[Proposition 4.2]{HILL1995610}), but that there is an extension given by an explicit formula. In fact, this explicit formula is the essential reason for the assumption $p > 2$ in \Cref{theorem:Iodd} as we have divided by $2$.

\begin{definition}
\label{definition:super}
(Super Stable Character) We define an irreducible character $\chi$ of $G_r$ to be super stable if $\chi \in \operatorname{Irr}(G_r \mid \sigma)$ where $\sigma = \Ind_{J_{M}}^{K^{l'}}(\psi_{\hat{M}})$ with $\hat{M}$ a stable matrix of $\mathfrak{g}_{l}$.
\end{definition}

Our goal in this section is to give a construction of super stable irreducible characters of $G_r$. Thus, throughout this section let $\sigma = \Ind_{J_{M}}^{K^{l'}}(\psi_{\hat{M}})$ with $\hat{M}$ a stable matrix of $\mathfrak{g}_{l}$ and $M=\hat{M} \mod{\pi^{l'}}$. To apply Clifford's theory, we first need to understand the stabilizer of $\sigma$ in $G_r$. The proof of the following lemma follows the argument in \cite[Lemma 6.3]{stasinski2017representations}. 

\begin{lemma}
Let $\sigma = \Ind_{J_{M}}^{K^{l'}}(\psi_{\hat{M}})$ with $\hat{M}$ a stable matrix of $\mathfrak{g}_{l}$ with $\bar{A}$ a split matrix in Jordan canonical form. We have $G_r(\sigma)=G_r(\psi_{M})$ where $\psi_{M} \in \Irr(K^l)$ such that $M=\hat{M} \mod{\pi^{l'}}$.
\end{lemma}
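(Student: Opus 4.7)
The plan is to verify the two inclusions $G_r(\sigma)\subseteq G_r(\psi_M)$ and $G_r(\psi_M)\subseteq G_r(\sigma)$. The first is immediate: by \cref{lemma:Hillstabilizer} we have $K^{l'}\subseteq G_r(\psi_M)$, so the $K^{l'}$-orbit of $\psi_M$ in $\Irr(K^l)$ is the singleton $\{\psi_M\}$. Applying Clifford's theorem (\cref{thm:Clifford}) to $K^l\triangleleft K^{l'}$ and to the irreducible $\sigma$, one concludes that $\sigma|_{K^l}$ is a positive multiple of $\psi_M$ alone. For $g\in G_r(\sigma)$ the isomorphism $g\cdot\sigma\cong\sigma$ then restricts to an isomorphism $g\cdot\psi_M\cong\psi_M$, so $g\in G_r(\psi_M)$.

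For the reverse inclusion, my plan is to first establish the decomposition $G_r(\psi_M)=K^{l'}\cdot C_{G_r}(\mu_{l,r}(\hat M))$ and then check that each factor lies in $G_r(\sigma)$. Write $\hat M=A+\pi B$ with $A$ split in Jordan canonical form and $B\in Z(C_{\mfg_l}(A))$, and set $A':=A\bmod\pi^{l'}=\mu_{l'}(\bar M)\in\mfg_{l'}$. By \cref{lemma:centralsplit} we have $C_{G_l}(\hat M)=C_{G_l}(A)$ and $C_{G_{l'}}(M)=C_{G_{l'}}(A')$. Teichmuller compatibility gives $\mu_{l',l}(A')=A$, so applying \cref{lemma:surjective} to the (trivially stable) matrix $A'\in\mfg_{l'}$ yields surjectivity of $C_{G_l}(A)\twoheadrightarrow C_{G_{l'}}(A')$, while \cref{lemma:surjective} applied to $\hat M\in\mfg_l$ yields surjectivity of $C_{G_r}(\mu_{l,r}(\hat M))\twoheadrightarrow C_{G_l}(\hat M)$. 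Composing, $C_{G_r}(\mu_{l,r}(\hat M))\twoheadrightarrow C_{G_{l'}}(M)$ is surjective, and combined with $G_r(\psi_M)=\rho_{l'}^{-1}(C_{G_{l'}}(M))$ from \cref{lemma:Hillstabilizer} this gives the desired decomposition.

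Now $K^{l'}\subseteq G_r(\sigma)$ holds trivially since $\sigma$ is a representation of $K^{l'}$ and inner automorphisms act trivially on isomorphism classes. For $g\in C_{G_r}(\mu_{l,r}(\hat M))$, reducing mod $\pi^l$ gives $g\hat M g^{-1}=\hat M$ in $\mfg_l$, and the explicit Heisenberg formula displayed before \cref{proposition:krakovski} then yields $g\cdot\psi_{\hat M}=\psi_{g\hat M g^{-1}}=\psi_{\hat M}$ on the conjugated domain $gJ_M g^{-1}$. Since $g$ centralizes $\bar M$, conjugation by $g$ preserves the form $B_M$, so $gJ_M g^{-1}/K^l$ is again a maximal isotropic subspace of $K^{l'}/K^l$. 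Therefore $g\cdot\sigma=\Ind_{gJ_M g^{-1}}^{K^{l'}}\psi_{\hat M}=\sigma$ by the independence of the choice of maximal isotropic in part (2) of \cref{proposition:krakovski}. The main obstacle is the decomposition step: \cref{proposition:stabilizer} only gives $G_r(\psi_M)=K^{l'}\cdot C_{G_r}(\mu_{l',r}(M))$ using the Teichmuller lift of $M$, whereas the Heisenberg argument works smoothly only with a lift of the stable matrix $\hat M$; the double application of \cref{lemma:surjective} together with $\mu_{l',l}(A')=A$ is what allows one to replace one lift by the other up to $K^{l'}$.
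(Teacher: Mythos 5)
Your argument is correct and takes essentially the same route as the paper: $G_r(\sigma)\subseteq G_r(\psi_M)$ by restricting $\sigma$ to $K^l$, and the reverse inclusion via the decomposition $G_r(\psi_M)=K^{l'}C_{G_r}(\mu_{l,r}(\hat{M}))$ together with the observation that $C_{G_r}(\mu_{l,r}(\hat{M}))$ fixes the explicit extension $\psi_{\hat{M}}$ and hence $\sigma$. The only (minor) difference is that you spell out, via \cref{lemma:centralsplit} and a double application of \cref{lemma:surjective}, the equality $K^{l'}C_{G_r}(\mu_{l',r}(M))=K^{l'}C_{G_r}(\mu_{l,r}(\hat{M}))$ that the paper asserts without comment, and you invoke part (2) of \cref{proposition:krakovski} (independence of the maximal isotropic $J_M$) where the paper uses part (3) (uniqueness of $\sigma$ above $\psi_{\hat{M}}$).
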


\begin{proof}
If $g \in G_r(\sigma)$ then  $g$ fixes $\psi_{M}$ as the restriction of $\sigma$ to $K^l$ is a direct sum of copies of $\psi_{M}$. Thus, $G_r(\sigma) \subseteq G_r(\psi_{M})$. By \cref{proposition:stabilizer}, $$G_r(\psi_{M})=\rho_{l'}^{-1}(C_{G_{l'}}(M))=K^{l'}C_{G_{r}}(\mu_{l',r}(M))=K^{l'}C_{G_{r}}(\mu_{l,r}(\hat{M})).$$ Any element of $C_{G_{r}}(\mu_{l,r}(\hat{M}))$ fixes $\psi_{\hat{M}}$. Since $\sigma$ is the unique character in $\Irr(K^{l'}\mid \psi_{\hat{M}})$, $\sigma$ is stabilized by
$C_{G_{r}}(\mu_{l,r}(\hat{M}))$, and so $C_{G_{r}}(\mu_{l,r}(\hat{M}))K^{l'}\subseteq G_{r}(\sigma)$. Thus, $G_r(\sigma)=G_r(\psi_{M})$. 

\end{proof}

To understand the irreducible representations of $G_r(\sigma)$ above $\sigma$, we will show first that $\sigma$ has an extension to its stabilizer. To show such an extension exists, it is enough to show that $\sigma$ extends to $P$ a Sylow $p$-subgroup of $G_r(\sigma)$ by \cite[Lemma 4.8] {stasinski2017regular}. From now on, we will fix such an arbitrary subgroup $P$. By Lemma 3.4 in \cite{krakovski2018regular}, there exists a maximal isotropic subspace $J_{M}/K^l$ contained in $K^{l'}/K^l$ which is $P$-invariant. It follows that both $J_{M}$ and $\psi_{\hat{M}}$ are $P$-invariant under the conjugation action.  




\begin{lemma}
\label{lemma:J}
\cite[In Proof of Theorem 3.1]{krakovski2018regular}
Let ${\psi}_{\hat{M}}$ be a character of $J_{M}$ as in \cref{proposition:krakovski}. If ${\psi}_{\hat{M}}$ extends to $J_{M}P$ then $\sigma$ extends to $K^{l'}P$.
\end{lemma}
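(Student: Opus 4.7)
The plan is to construct the desired extension of $\sigma$ explicitly as an induced representation from $J_M P$ up to $K^{l'}P$, exploiting the hypothesis that $\psi_{\hat{M}}$ extends to a one-dimensional character $\tilde{\psi}_{\hat{M}}$ of $J_M P$. Concretely, I would set
$$\tilde{\sigma} := \Ind_{J_M P}^{K^{l'} P}\bigl(\tilde{\psi}_{\hat{M}}\bigr)$$
and check that $\tilde{\sigma}$ is a representation of $K^{l'} P$ whose restriction to $K^{l'}$ equals $\sigma = \Ind_{J_{M}}^{K^{l'}}(\psi_{\hat{M}})$, so that $\tilde{\sigma}$ serves as the required extension.

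The main analytic tool is Mackey's subgroup theorem applied to the restriction of the induced representation $\tilde{\sigma}$ back to $K^{l'}$. Since $J_M \subseteq K^{l'}$, we have $K^{l'}\cdot J_M P = K^{l'} P$, so the double coset space $K^{l'}\backslash K^{l'} P / J_M P$ is a single orbit. Mackey therefore collapses to
$$\tilde{\sigma}|_{K^{l'}} = \Ind_{K^{l'}\cap J_M P}^{K^{l'}}\bigl(\tilde{\psi}_{\hat{M}}|_{K^{l'}\cap J_M P}\bigr).$$
The heart of the proof is then to identify the intersection $K^{l'}\cap J_M P$ with $J_M$. Granted this identification, the fact that $\tilde{\psi}_{\hat{M}}$ restricts to $\psi_{\hat{M}}$ on $J_M$ yields
$$\tilde{\sigma}|_{K^{l'}} = \Ind_{J_M}^{K^{l'}}(\psi_{\hat{M}}) = \sigma,$$
which is exactly the extension property we want.

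The principal obstacle is precisely the intersection identity $K^{l'}\cap J_M P = J_M$. Establishing it requires a careful analysis of how the Sylow $p$-subgroup $P$ sits relative to $K^{l'}$ and $J_M$, using the specific choices made above: that $P$ was selected so that the maximal isotropic $J_M/K^{l}$ of $B_M$ is $P$-invariant, and that $\psi_{\hat{M}}$ is $P$-invariant on $J_M$. Once the intersection has been pinned down, the remainder of the argument is a formal application of transitivity of induction and Mackey's formula, with no further input beyond Clifford-theoretic bookkeeping.
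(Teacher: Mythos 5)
Your overall strategy --- induce the hypothesized extension $\tilde{\psi}_{\hat M}$ from $J_MP$ to $K^{l'}P$ and check via Mackey (a single double coset, since $K^{l'}$ is normal, so $K^{l'}\cdot J_MP=K^{l'}P$) that the restriction to $K^{l'}$ is $\sigma$ --- is essentially the argument in the source the paper cites for this lemma, so the architecture is right. The problem is that the step you yourself single out as the heart of the proof, namely $K^{l'}\cap J_MP=J_M$ (equivalently $K^{l'}\cap P\subseteq J_M$, since $K^{l'}\cap J_MP=J_M\,(K^{l'}\cap P)$), is left unproven, and the ingredients you propose to extract it from cannot deliver it: the $P$-invariance of $J_M$ only ensures that $J_MP$ is a group, and the $P$-invariance of $\psi_{\hat M}$ only makes the extendability hypothesis meaningful; neither constrains $K^{l'}\cap P$. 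What actually forces the inclusion is that $P$ lies in the centralizer $C_{G_r}(\mu_{l,r}(\hat M))$ (this is how $P$ is used in the proof of \cref{proposition:M'extends}): if $I+\pi^{l'}x\in K^{l'}\cap P$, then $\pi^{l'}\bigl(x\,\mu_{l,r}(\hat M)-\mu_{l,r}(\hat M)\,x\bigr)=0$ in $\mfg_r$, so $\bar x\in C_{\mfg_1}(\bar M)$, hence $\tr\bigl(\bar M(\bar x\bar y-\bar y\bar x)\bigr)=0$ for all $\bar y$, i.e.\ $I+\pi^{l'}x$ lies in the radical $R_M$ of $B_M$; and $R_M\subseteq J_M$ because every maximal isotropic subspace contains the radical. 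With that inclusion, $K^{l'}\cap J_MP=J_M$, the degrees match, and your Mackey computation does give $\tilde{\sigma}|_{K^{l'}}=\sigma$.

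A related point you need to settle for the argument to even get off the ground: if $P$ were literally a Sylow $p$-subgroup of the full stabilizer $G_r(\sigma)$, it would contain the normal $p$-subgroup $K^{l'}$, so $J_MP=K^{l'}P=P$, your induced character would be one-dimensional, and the intersection identity would fail whenever $J_M\neq K^{l'}$. The lemma has to be read with $P$ a Sylow $p$-subgroup of $C_{G_r}(\mu_{l,r}(\hat M))$, so that $K^{l'}P$ is the relevant Sylow $p$-subgroup of $G_r(\sigma)$ containing $K^{l'}$. Since your proposal neither proves the key intersection fact nor identifies the correct positioning of $P$ that makes it true, there is a genuine gap, even though the induction-plus-Mackey framework is the right one.
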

 


Thus, following \cref{lemma:J}, our goal will be to show that ${\psi}_{\hat{M}}$ extends to $J_{M}P$. 

\begin{proposition}
\label{proposition:M'extends}
Given ${\psi}_{\hat{M}}$ an irreducible character of $J_{M}$ with $\hat{M}$ a stable matrix with entries in $\mfO_l$ then ${\psi}_{\hat{M}}$ extends to $J_{M}P$.
\end{proposition}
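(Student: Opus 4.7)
The plan is to apply \cref{lemma:extension} with $G = J_M P$, $N = J_M$, and $H = P$; note $J_M$ is normal in $J_M P$ because it is $P$-invariant. The character $\psi_{\hat M}$ is one-dimensional; it is $J_M P$-invariant since $P$-invariance is given by construction and $J_M$-invariance follows from $[J_M, J_M] \subseteq K^{2l'} \subseteq K^l$ together with the isotropic condition that defines $J_M / K^l$ (which makes $\psi_M$ vanish on such commutators). Part (3) of \cref{lemma:extension} then reduces the existence of the extension to $J_M P$ to verifying that $\psi_{\hat M}$ is trivial on $[P, P] \cap J_M$.

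So I fix an arbitrary $u = I + \pi^{l'} x \in [P, P] \cap J_M$ and aim to show $\tr(\hat M(x - \tfrac{\pi^{l'}}{2} x^2)) \equiv 0 \pmod{\pi^l}$. Since $u$ is a product of commutators, $\det u = 1$. Applying \cref{lemma:determinant} in the odd case, and using the conjugation-invariant identity $\sum_{i \neq j} T_{i,i} T_{j,j}/2 = \tfrac{1}{2}((\tr \bar x)^2 - \tr \bar x^2)$, yields the key constraint
\begin{equation*}
\tr(x) \equiv \tfrac{\pi^{l'}}{2}\bigl(\tr(\bar x^2) - (\tr \bar x)^2\bigr) \pmod{\pi^l}.
\end{equation*}
This plays the role of the constraint $\tr(x) \equiv 0 \pmod{\pi^l}$ from the even case, with a new $\pi^{l'}$-correction coming from the $\pi^{2l'}$-term of the determinant formula.

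Following the case analysis of \cref{theorem:Ieven}, the split case reduces to the split primary case via the block decomposition of \cref{lemma:blockcentral}, and the non-split case is handled by passing to an unramified extension exactly as in Case 3 of the even proof. In the split primary case $\hat M = \lambda I + N + \pi B$ with $N = \bigoplus N_{n_i}$ and $B \in Z(C_{\mfg_l}(A))$, I expand $\tr(\hat M x) - \tfrac{\pi^{l'}}{2} \tr(\hat M x^2)$ by separating the $A$ and $\pi B$ contributions; the $\pi \tr(Bx^2)$ piece dies since $\pi \cdot \pi^{l'} = \pi^l$. Substituting the determinant constraint for $\lambda \tr(x)$ collapses the $\bar\lambda \tr \bar x^2$ pieces against one another, reducing the vanishing modulo $\pi^l$ to the identity
\begin{equation*}
\tr(N x) + \pi \tr(B x) \equiv \tfrac{\pi^{l'}}{2}\bigl[\bar\lambda (\tr \bar x)^2 + \tr(\bar N \bar x^2)\bigr] \pmod{\pi^l}.
\end{equation*}
Verifying this identity uses the isotropic condition on $\bar x \in J_M/K^l$ for the form $B_M$, together with the explicit block-Toeplitz description of $C_{\mfg_l}(A)$ and its center from \cref{theorem:center}.

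The main obstacle is the second-order correction $\tfrac{\pi^{l'}}{2} \tr(\hat M x^2)$ in the formula for $\psi_{\hat M}$; this term has no analogue in the even case and is the source of the $p > 2$ hypothesis, since we must divide by $2$. The heart of the argument is showing that this correction cancels exactly against the $\pi^{2l'}$-contribution in the determinant expansion together with the data recorded by $B \in Z(C_{\mfg_l}(A))$. The matching depends on the compatibility between the isotropic structure of $J_M$ and the Jordan block combinatorics of $A$ developed in \cref{Section:3}; this is where the stability hypothesis on $\hat M$ is essential.
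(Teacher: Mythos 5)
Your skeleton matches the paper's: reduce via \cref{lemma:extension} to showing $\psi_{\hat M}$ is trivial on $[P,P]\cap J_M$, extract a constraint from $\det(I+\pi^{l'}x)=1$ using the odd case of \cref{lemma:determinant}, and run the same three cases (split primary, split via \cref{lemma:blockcentral} and \cref{lemma:commutator}, non-split via an unramified extension). Your determinant constraint $\tr(x)\equiv \tfrac{\pi^{l'}}{2}\bigl(\tr(\bar x^2)-(\tr\bar x)^2\bigr) \pmod{\pi^l}$ is correct, and your algebraic reduction of $\tr\bigl(\hat M(x-\tfrac12\pi^{l'}x^2)\bigr)\equiv 0$ to the residual identity $\tr(Nx)+\pi\tr(Bx)\equiv \tfrac{\pi^{l'}}{2}\bigl[\bar\lambda(\tr\bar x)^2+\tr(\bar N\bar x^2)\bigr] \pmod{\pi^l}$ checks out.

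However, the proof stops exactly where the content is, and the mechanism you propose for closing it is off. You never establish the fact that actually makes the split primary case work: since $P$ lies in $C_{G_r}(\mu_{l,r}(\hat M))$, every element $I+\pi^{l'}x$ of $[P,P]$ commutes with $\mu_{l,r}(\hat M)$, so $x$ (mod $\mathfrak{p}^{l}$) lies in $C_{\mfg_l}(A)$ and is block upper-Toeplitz by \cref{lemma:singla}. Once this is in hand, the residual identity is not a delicate "matching" at all: each side vanishes separately. Indeed $\tr(Nx)=0$ and $\tr(\bar N\bar x^2)=0$ because $x$ and $x^2$ are block Toeplitz, so $Nx$ and $\bar N\bar x^2$ have zero diagonal; the determinant constraint forces $\tr(x)\equiv 0\pmod{\pi^{l'}}$, hence $(\tr\bar x)^2=0$; and by \cref{theorem:center} the matrix $B\in Z(C_{\mfg_l}(A))$ has constant diagonal $b$ with $B-bI$ block-diagonal strictly upper triangular, so $\tr(Bx)=b\tr(x)$ and $\pi\tr(Bx)\equiv 0\pmod{\pi^l}$. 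In particular, the isotropic condition on $J_M/K^l$ for the form $B_M$, which you cite as an essential ingredient and as the source of the "cancellation against the $\pi^{2l'}$-term," plays no role here and cannot supply the needed vanishing; what is essential is membership of $[P,P]$ in the centralizer of $\mu_{l,r}(\hat M)$ together with \cref{theorem:center}. As written, the key identity is asserted rather than proved, and the suggested route to it would not produce a proof, so there is a genuine gap at this step.
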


\begin{proof}
Since ${\psi}_{\hat{M}}$ is a one dimensional character, we just need to show that ${\psi}_{\hat{M}}$ restricted to $[P,P] \cap J_{M}$ is the trivial character by \cref{lemma:extension}. Since $\psi_{\hat{M}}(I+\pi^{l'}X)=\psi(\pi^{-{l}}\tr(\hat{M}(X-\frac{1}{2}\pi^{l'}X^2))$, it is enough to have that $\tr(\hat{M}(X-\frac{1}{2}\pi^{l'}X^2))=0 \mod{\mathfrak{p}^l}$ for an arbitrary element $I+\pi^{l'}X \in [P,P] \cap J_{M}$. We will prove this proposition by looking at three cases.

\textbf{Case 1: $A$ is a split primary matrix.}

Since we can pick $\hat{M}$ up to conjugation, without loss of generality assume $\hat{M}=A+\pi B$ is a stable matrix, with  $A=\lambda I \oplus N_{n_i}$ where each $N_{n_i}$ is a principal nilpotent matrix over $\mfO_l$ and $B$ an element of $Z(C_{\mathfrak{g}_{l}}(A))$. Let $I+\pi^{l'}X$ be an element of  $[P,P] \cap J_{M}$, thus $\det(I+\pi^{l'}X)=1$ where $P$ is a $p$-Syllow subgroup of $C_{G_r}(\mu_l,r(\hat{M}))$. By \cref{lemma:determinant}, we have $$\det(I+\pi^{l'}X)=1+\pi^{l'}\Tr(X)+\pi^{2l'}\sum_{j=1}^n\sum_{i\neq j} T_{i,i}T_{j,j}/2$$ where $^gX=T+\pi Y$ with $T$ an upper triangular matrix. Thus, $$\Tr(X)+\pi^{l'}\sum_{j=1}^n\sum_{i\neq j} T_{i,i}T_{j,j}/2= 0 \mod{\pi^l}.$$ 
This implies $\Tr(T)=\Tr(X)=0 \mod{\pi}$ so $T_{j,j}=-\sum_{i\neq j} T_{i,i} \mod{\pi}$. Thus we obtain that: 
$$\sum_{j}^n\sum_{i\neq j} T_{i,i}T_{j,j}/2=-\sum_{j} T_{j,j}^2/2=-\Tr(T^2)/2=-\Tr(X^2)/2 \mod{\pi}.$$ 
So we conclude that,
$$\Tr(X)-\pi^{l'}\Tr(X^2)/2= 0 \mod{\pi^l},$$
as $$\pi^{l'}\sum_{j}^n\sum_{i\neq j} T_{i,i}T_{j,j}/2=-\pi^{l'}\Tr(X^2)/2 \mod{\pi^l}.$$

Since $X \in C_{\mathfrak{g} _l}(A)$, $X$ is a block upper-Toeplitz matrix by \cref{lemma:singla}. Working over $\mfO_l$ we have:  $$\Tr(\hat{M}(X-\frac{1}{2}\pi^{l'}X^2))=\Tr(A(X-\frac{1}{2}\pi^{l'}X^2)) + \pi \Tr(BX)= \lambda \Tr(X-\frac{1}{2}\pi^{l'}X^2) + \pi \Tr(BX)=\pi \Tr(BX).$$
Since $B$ is an element of $Z(C_{\mathfrak{g}_{l}}(A))$, it has a unique value in the diagonal denoted as $b$ by \cref{theorem:center}. Thus by computation $\pi \Tr(BX)=\pi b\Tr(X)=0$ as $\Tr(X)=0 \mod{\pi^{l'}}$. Thus, the value of $\psi_{\hat{M}}$ on $[P,P] \cap N$ is trivial and by \cref{lemma:extension} we conclude that $\psi_{\hat{M}}$ extends to a character of $J_{M}P$. 

\textbf{Case 2: $A$ is a split matrix.}

By applying \cref{lemma:blockcentral}, we can assume that $\hat{M}=A+\pi B= \oplus A_i+\pi B_i $ with each $A_i$ a split primary matrix and $B_i$ an element of $Z(C_{\mathfrak{g}_{l}}(A_i))$. Let $I+\pi^{l'}X$ be an element of  $[P,P] \cap J_{M}$. By \cref{lemma:commutator},  $I+\pi^{l'}X= \oplus_{i=1}^l I+\pi^{l'}X_i$  with $I+\pi^{l'}X_i \in [P_i,P_i]$ where $P= \oplus P_i$ with $P_i\subset C_{G_r}(A_i)$. By letting $\hat{M}_i=A_i+\pi B_i$ we have that $$\Tr(\hat{M}(X-\frac{1}{2}\pi^{l'}X^2))=\sum \Tr(\hat{M}_i(X_i-\frac{1}{2}\pi^{l'}X_i^2)).$$ Since $A_i$ is a split primary matrix, $\Tr(\hat{M}_i(X_i-\frac{1}{2}\pi^{l'}X_i^2))=0$ by the argument in case 1. Thus, the value of $\psi_{\hat{M}}$ on $[P,P] \cap J_{M}$ is trivial. We conclude that $\psi_{\hat{M}}$ extends to a character of $J_{M}P$. 

\textbf{Case 3: $A$ is a non-split matrix.}

Since ${\hat{M}}$ is a stable matrix by passing to an unramified extension $\mathcal{\hat{O}}_l$ of $\mathcal{O}_l$ we have that ${\hat{M}}$ is conjugate to a matrix $A+\pi B$ with entries in $\mathcal{\hat{O}}_l$ such that $A$ is in Jordan canonical form with $B$ an element of $Z(C_{\hat{\mathfrak{g}_{l}}}(A))$. Let $\hat{\mathrm{F}}$ be the corresponding unramified extension of $F$ and $\hat{\psi}:\hat{\mathrm{F}} \to \mathbb{C}^*$ be the corresponding character that extends $\psi:\mathrm{F} \to \mathbb{C}^*$ then as defined before we have $\hat{\psi}_{\hat{M}}$ a character of $\hat{J}_M$. Since ${\hat{M}}$ splits over $\mathcal{\hat{O}}_l$, there is a character $\Hat{\chi}$ of $\hat{J}_M\hat{P}$ that extends $\hat{\psi}_{\hat{M}}$. Define $\chi$ to be the restriction of $\Hat{\chi}$ to $J_{M}P$ then $\chi$ is an extension of ${\psi}_{\hat{M}}$.

\end{proof}

 We use the previous proposition to finish the proof of \cref{theorem:Iodd}, which we restate here for the reader's benefit. 
 
\begin{theorem B}
Assume that $\mathcal{O}$ has residue field of characteristic $p>2$ and $r=2l-1$. Let $\chi$ be a super stable irreducible character of $G_r$, above an irreducible character $\sigma$ of $K^{l'}$. Then:
\begin{enumerate}
    \item $\sigma$ extends to a character $\tilde{\sigma}$ of the stabilizer $G_r(\sigma)$.
    \item there is an irreducible character $\rho$ of $G_r(\sigma)/K^{l'}$ such that $\chi=\operatorname{Ind}_{G_r(\sigma)}^{G_r}(\tilde{\sigma}\rho)$.
\end{enumerate} 

\end{theorem B}

\begin{proof}
By \cref{proposition:M'extends} and \cref{lemma:J}, we have that $\sigma$ extends to a character $\tilde{\sigma}$ of the stabilizer $G_r(\sigma)$. Since there is an extension of $\sigma$ to the stabilizer, part (2) follows by an application of Clifford theory, \cref{thm:Clifford}. 
\end{proof}

\section{Remarks on a result of Hill}
\label{Section:7}

In this section, we will assume that $r= 2l-1$ is odd and that $M$ is a strongly semisimple matrix over $\mathcal{O}_{l'}$. Hill's Proposition 3.6 of \cite{hill1995semisimple} tries to classify all irreducible representations of $G_r$ above $\psi_M \in \Irr(K^l)$, which we now state. 

\begin{statement}
\cite[Proposition 3.6]{hill1995semisimple}
\label{proposition:Hillodd}
Let $r \geq 3$ be odd, let $\chi$ be a strongly semisimple irreducible character of $G_r$ above $\psi_M \in \Irr(K^l)$ with $M$ a strongly semisimple matrix over $\mathcal{O}_{l'}$. Then there exist an irreducible $\sigma$ of $\Ind_{K^l}^{K^{l'}}(\psi_M)$ and an extension of $\sigma$ to $G_r(\psi_M)$ such that $\chi=\Ind_{G_r(\psi_M)}^{G_r}(\sigma \rho)$, where $\rho$ is an irreducible character  of $G_r(\psi_M)/K^{l'}$.
\end{statement}

Unfortunately, the proof in \cref{proposition:Hillodd} contains an error. We have a counterexample to \cref{proposition:Hillodd}. For this counterexample, we will fix $M=aI$, a scalar matrix over $\mathcal{O}_{l'}$. Thus any irreducible character $\chi$ of $G_r$ above $\psi_M$ is strongly semisimple. Since $M$ is a scalar matrix, we have $G_r(\psi_M)=G_r$, and the alternating bilinear form $B_M$ gives us that the subgroup $R_{M}$ is equal to $K^{l'}$. Thus by "Heisenberg Lift" $\psi_M$ has extensions to $K^{l'}$. When $p > 2$, recall each such extension has an explicit formula in terms of a matrix $\hat{M} \in \mfg_{l}$ such that $M=\hat{M} \mod{\pi^{l'}}$:

\begin{align*}
\psi_{\hat{M}}:K^{l'} & \longrightarrow\mathbb{C}^{\times}\\
\psi_{\hat{M}}(1+\pi^{l'}x) & =\psi(\pi^{-{l}}\tr(\hat{M}(x-\frac{1}{2}\pi^{l'}x^2)).
\end{align*}

In this case any irreducible $\sigma$ of $\Ind_{K^l}^{K^{l'}}(\psi_M)$ is exactly equal to $\psi_{\hat{M}}$ for some $\hat{M}$ as above. 

\begin{proposition}
Let $M=aI$ be a scalar matrix over $\mathcal{O}_{l'}$ with $\sigma=\psi_{\hat{M}}$ an irreducible character of $K^{l'}$ above $\psi_M$. We have $G_r(\sigma)=\rho_{l}^{-1}(C_{G_{l}}(\hat{M}))$ for any $\hat{M} \in \mfg_{l}$ such that $M=\hat{M} \mod{\pi^{l'}}$. 
\end{proposition}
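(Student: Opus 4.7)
The plan is to compute the conjugation action of $G_r$ on $\sigma = \psi_{\hat M}$ directly from the explicit Heisenberg formula, and then to conclude via the parametrization of $\Irr(K^{l'}\mid\psi_M)$ by lifts of $M$ afforded by \cref{proposition:krakovski}. For $g \in G_r$ and an arbitrary $I + \pi^{l'}x \in K^{l'}$, the identities $g^{-1}(I+\pi^{l'}x)g = I + \pi^{l'}g^{-1}xg$ and $(g^{-1}xg)^2 = g^{-1}x^2 g$ combined with the defining formula yield
\[
{}^{g}\psi_{\hat M}(I+\pi^{l'}x) = \psi\bigl(\pi^{-l}\tr(\hat M(g^{-1}xg - \tfrac{1}{2}\pi^{l'}g^{-1}x^2 g))\bigr),
\]
and the cyclic property of the trace rewrites this as $\psi(\pi^{-l}\tr(g\hat M g^{-1}(x - \tfrac{1}{2}\pi^{l'}x^2)))$. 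Since only the class of $g\hat M g^{-1}$ modulo $\pi^l$ influences this value, it follows that ${}^{g}\psi_{\hat M} = \psi_{\rho_l(g)\hat M \rho_l(g)^{-1}}$, where the conjugation is interpreted in $\mfg_l$.

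The next step will be to verify that $\rho_l(g)\hat M \rho_l(g)^{-1}$ is again a lift of $M$. Because $M = aI$ is scalar, we have $\rho_{l'}(\rho_l(g)\hat M \rho_l(g)^{-1}) = \rho_{l'}(g) M \rho_{l'}(g)^{-1} = M$, so indeed $\rho_l(g)\hat M \rho_l(g)^{-1} \equiv M \pmod{\pi^{l'}}$. Correspondingly, the character $\psi_{\rho_l(g)\hat M \rho_l(g)^{-1}}$ is a genuine extension of $\psi_M$ to $R_M = K^{l'}$ (here $R_M = K^{l'}$ since the alternating form $B_M$ is identically zero when $M$ is scalar). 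Invoking \cref{proposition:krakovski}(3) in this scalar setting --- where $J_M = R_M = K^{l'}$, so $\sigma = \psi_{\hat M}$ and the lifts $\hat M \in \mfg_l$ of $M$ biject with $\Irr(K^{l'}\mid\psi_M)$ --- one concludes that ${}^{g}\sigma = \sigma$ if and only if $\rho_l(g)\hat M \rho_l(g)^{-1} = \hat M$ in $\mfg_l$, equivalently $\rho_l(g) \in C_{G_l}(\hat M)$, giving the desired equality $G_r(\sigma) = \rho_l^{-1}(C_{G_l}(\hat M))$.

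The main technical obstacle will be the careful bookkeeping of the congruence classes entering the trace pairing: $\hat M \in \mfg_l$ must first be lifted to $\mfg_r$ via $\mu_{l,r}$ before being paired with elements of $\mfg_r$, and one must verify that both the pairing and the conjugation action on it depend only on $g$ modulo $\pi^l$. Beyond this, the argument requires only the cyclic trace identity, the observation that scalar $M$ is stable under conjugation mod $\pi^{l'}$, and the bijectivity of the Heisenberg parametrization in the scalar case.
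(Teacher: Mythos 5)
Your overall route coincides with the paper's: you compute the conjugation action on $\sigma=\psi_{\hat M}$ from the explicit Heisenberg formula using cyclicity of the trace, and reduce the stabilizer condition to conjugation of the lift $\hat M$ inside $\mfg_l$. The first half of your argument is fine (the convention ${}^{g}(\cdot)$ versus $g^{-1}(\cdot)g$ is immaterial for the stabilizer, and your observation that $g\hat M g^{-1}$ is again a lift of $M$ because $M$ is scalar is correct), as is the inclusion $\rho_l^{-1}(C_{G_l}(\hat M))\subseteq G_r(\sigma)$, which is immediate from the formula.

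The gap is in your final step. You deduce that ${}^{g}\sigma=\sigma$ forces $\rho_l(g)\hat M\rho_l(g)^{-1}=\hat M$ in $\mfg_l$ by appealing to \cref{proposition:krakovski}(3) ``in the scalar setting,'' but that proposition only gives a bijection between \emph{extensions} of $\psi_M$ to $R_M$ and $\Irr(K^{l'}\mid\psi_M)$, together with the fact that every extension is of the form $\psi_{\hat M}$ for \emph{some} lift $\hat M$; it does not assert that distinct lifts give distinct characters, which is exactly the implication your ``only if'' direction needs. That injectivity is the real content of the proposition, and it is precisely what the paper proves: since $x\mapsto x-\tfrac{1}{2}\pi^{l'}x^{2}$ is a bijection of $\mfg_l$, the equality ${}^{g}\psi_{\hat M}=\psi_{\hat M}$ yields $\tr(\hat M y)\equiv\tr(g\hat M g^{-1}y)\bmod{\pi^{l}}$ for all $y\in\mfg_l$, and the non-degeneracy of the trace form then forces $g\hat M g^{-1}\equiv\hat M\bmod{\pi^{l}}$. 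Alternatively, you can close the gap by counting: the lifts of $M$ form a coset of $\pi^{l'}\mfg_l$ in $\mfg_l$, hence number $q^{N^{2}}$, while the extensions of $\psi_M$ to $K^{l'}$ form a torsor under $\Irr(K^{l'}/K^{l})\cong\Irr(\mfg_1)$, also of size $q^{N^{2}}$, so the surjection from lifts to extensions is a bijection. With either patch inserted at this point, your argument becomes a complete proof along essentially the paper's lines.
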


\begin{proof}
Note that the following map is a bijection: 

\begin{align*}
\mfg_{l} & \longrightarrow \mfg_{l}\\
x & \longmapsto x-\frac{1}{2}\pi^{l'}x^2.
\end{align*}
Thus if $g \in G_r(\sigma)$ then by a change of variable from  $x-\frac{1}{2}\pi^{l'}x^2$ to $y$ we have:
$$\tr(\hat{M}y)=\tr(g\hat{M}g^{-1}y) \mod \pi^l$$ for any $y \in \mfg_{l}$. Since the trace form is non-degenerate, we have $g \in \rho_{l}^{-1}(C_{G_{l}}(\hat{M}))$.
\end{proof}

We will now fix $\hat{M}=M+\pi^{l'} B$ such that $B$ is not an element of  $Z(C_{\M_{N}(\mathcal{{O}}_{l'})}(aI))$, the center of $ \mathfrak{g}_{l}$. Thus $B$ is not a scalar matrix. Note that with this choice of $B$ we are not in the super stable case. Under such conditions, we have $G_r(\sigma)$ is a proper subset of $G_r(\psi_M)=G_r$, as there exists an element $g$ that does not commute with $\hat{M}$. Thus, it is not possible for $\sigma$ to have an extension to $G_r(\psi_M)=G_r$, as claimed in \cref{proposition:Hillodd}. This gives a counterexample to the construction of \cref{proposition:Hillodd}. 

To better understand where the argument fails in \cref{proposition:Hillodd}, we take a moment to recall Hill's approach. Hill's construction is based on a counting argument to show that all the irreducible constituents of $\Ind_{K^l}^{K^{l'}}(\psi_M)$ extend to $G_r(\psi_M)$. The proof fails where it is claimed that the number of minimal degree characters of $G_r$ above $\psi_{\bar{M}} \in \Irr(K^{r-1})$ is bounded above by the number of choices for $\psi_{M} \in \Irr(K^{l})$ times the number of extensions of an irreducible constituent of $\Ind_{K^l}^{K^{l'}}(\psi_M)$ to $G_r(\psi_M)$. We note it is not clear why only one irreducible constituent of $\Ind_{K^l}^{K^{l'}}(\psi_M)$ is needed for this upper bound, as there could be several irreducible constituents. Unless the irreducible constituents are all conjugate under $G_r(\psi_M)$, each one will have distinct irreducible representations lying above them. Note by comparing the counterexample we gave with a $\sigma$ coming from the super stable case, we obtain examples of different irreducible constituents of $\Ind_{K^l}^{K^{l'}}(\psi_M)$ that are not conjugate under $G_r(\psi_M)$. 

One of the goals of Hill's paper \cite{hill1995semisimple} is to construct the characters of minimal degree in the semisimple case. We point out that the construction of the super stable gives constructions of characters of minimal degree of $G_r$ above $\psi_A$ a character of $K^{r-1}$. An interesting question is whether there are characters of minimal degree which are not stable and, if so, how to construct them.

\vspace{2mm}

\bibliographystyle{plain}
\bibliography{THM_1.bib}

\end{document}